\providecommand{\U}[1]{\protect \rule{.1in}{.1in}}
\newtheorem{theorem}{Theorem}
\newtheorem{lemma}[theorem]{Lemma}
\theoremstyle{definition}
\newtheorem{definition}[theorem]{Definition}
\theoremstyle{remark}
\newtheorem{remark}{Remark}
\begin{document}
\title[Finite biorthogonal polynomials in one variable]{Finite biorthogonal
polynomials suggested by the finite orthogonal polynomials $M_{n}^{\left(
p,q\right) }\left( x\right) $}
\author{Esra G\"{U}LDO\u{G}AN LEKES\.{I}Z}
\address{Department of Mathematics, Faculty of Science, Gazi University,
Ankara 06500, T\"{u}rkiye}
\email{esragldgn@gmail.com}
\subjclass{33C45}
\keywords{biorthogonal polynomial, Jacobi polynomial, Laguerre polynomial,
differential equation, generating function, recurrence relation, fourier
transform, laplace transform, fractional integral, fractional derivative}

\begin{abstract}
In this paper, we derive a pair of finite univariate biorthogonal
polynomials suggested by the finite univariate orthogonal polynomials $%
M_{n}^{\left( p,q\right) }\left( x\right) $. The corresponding
biorthogonality relation is given. Some useful relations and properties,
concluding differential equation and generating function, are presented.
Further, a new family of finite biorthogonal functions is obtained using
Fourier transform and Parseval identity. In addition, we compute the Laplace
transform and fractional calculus operators for polynomials $M_{n}\left(
p,q,\upsilon ;x\right) $.
\end{abstract}

\maketitle

\section{Introduction}

Biorthogonal polynomials dates back to Didon \cite{Didon} and Deruyts \cite%
{Deruyts}. There are extensive usage in several braches of mathematics and
also physics such as quantum mechanics and electrostatics. In recent years,
the biorthogonal polynomial theory has been studied and expanded by many
researchers \cite%
{AlSalamVerma,Carlitz2,Carlitz3,Chai,Dou,ITV,Konhauser,Konhauser2,MT0,MT2,MT,Rassias,Seker,KT,P,PK}%
. Further, as is known, biorthogonal polynomials reduce to orthogonal
polynomials in a special case.

The main ones of well-known biorthogonal polynomials in one variable are
Konhauser polynomials \cite{Konhauser2} suggested by the Laguerre
polynomials and the biorthogonal polynomials, which are reduced to Jacobi
polynomials in a special case \cite{MT}. Studies started with definition of
Laguerre matrix polynomials being extentions of the Laguerre polynomials 
\cite{JCN}. Then they have reached today with a similar extension of
classical orthogonal polynomials \cite{DJL,DJ,JCP,JCP2}.

\begin{definition}
Assume that polynomials $d(x)$ and $f(x)$ are of degree $k$ and $l$ for $k>0$
and$\ l>0$. $D_{r}(x)$ and$\ F_{n}(x)$ represent polynomials of degree $r$
and $n$ with respect to (w.r.t.) real-valued polynomials $d(x)$ and $f(x)$,
respectively. Then polynomials $D_{r}(x)$ and$\ F_{n}(x)$ are of degree $rk$
and $nl$, respectively. Thus, polynomials $d(x)$ and $f(x)$ are said to be
fundamental polynomials \cite{Konhauser}.
\end{definition}

\begin{definition}
Let $\rho \left( x\right) $ be a real-valued weight function on an interval $%
(a,b)$ if%
\begin{equation}
I_{j,s}=\int\limits_{a}^{b}\rho \left( x\right) \left[ d\left( x\right) %
\right] ^{j}\left[ f\left( x\right) \right] ^{s}dx,\ \ \ j,s=0,1,2,...
\label{d}
\end{equation}%
holds, with%
\begin{equation*}
I_{0,0}=\int\limits_{a}^{b}\rho \left( x\right) dx\neq 0.
\end{equation*}%
If the integrals (\ref{d}) exist for $j,s=0,1,2,...$, then%
\begin{equation*}
\int\limits_{a}^{b}\rho \left( x\right) x^{s}dx,\ \ \ s=0,1,2,...
\end{equation*}%
holds \cite{Konhauser}.
\end{definition}

\begin{definition}
For $r,n\in 
\mathbb{N}
_{0}$, if%
\begin{equation*}
J_{r,n}=\int\limits_{a}^{b}\rho \left( x\right) D_{r}(x)F_{n}(x)dx=\QATOPD\{
. {\ \ \ \ \ 0;\ \ r\neq n}{\text{not }0;\ \ r=n},
\end{equation*}%
then the sets of the polynomials $\{D_{r}(x)\}$ and $\{F_{n}(x)\},$
corresponding to the fundamental polynomials $d(x)$ and $f(x)$, are called
biorthogonal w.r.t. the weight function $\rho (x)$ on $(a,b)$ \cite%
{Konhauser}.
\end{definition}

Equivalent to this definition, the following theorem can be given for
defining biorthogonal polynomials.

\begin{theorem}
Assume that $\rho (x)$ is a weight function on $(a,b)$. For $r,n\in 
\mathbb{N}
_{0}$, if the fundamental polynomials $d\left( x\right) $ and $f\left(
x\right) $ are such that%
\begin{equation}
\int\limits_{a}^{b}\rho \left( x\right) \left[ d\left( x\right) \right]
^{j}F_{n}(x)dx=\QATOPD\{ . {\ \ 0,\ j\neq n}{\text{not }0,\ j=n\ \ \ \ },
\label{a}
\end{equation}%
and%
\begin{equation}
\int\limits_{a}^{b}\rho \left( x\right) \left[ f\left( x\right) \right]
^{j}D_{r}(x)dx=\QATOPD\{ . {\ \ 0,\ j\neq r}{\text{not }0,\ j=r\ \ \ \ },
\label{b}
\end{equation}%
then%
\begin{equation}
\int\limits_{a}^{b}\rho \left( x\right) D_{r}(x)F_{n}(x)dx=\QATOPD\{ . {\ \
\ \ \ 0,\ \ r\neq n}{\text{not }0,\ \ r=n}  \label{c}
\end{equation}%
is satisfied and, conversely, when (\ref{c}) holds then both (\ref{a}) and (%
\ref{b}) hold \cite{Konhauser}.
\end{theorem}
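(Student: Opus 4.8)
The plan is to prove both implications by re-expanding one factor of each product in the monomial-type basis attached to its fundamental polynomial and then applying the hypotheses term by term. Throughout I would use the structure forced by the definition: $D_{r}(x)=\sum_{i=0}^{r}a_{i}\,[d(x)]^{i}$ with $a_{r}\neq 0$ and $F_{n}(x)=\sum_{s=0}^{n}b_{s}\,[f(x)]^{s}$ with $b_{n}\neq 0$. The nonvanishing of these top coefficients is exactly what makes the two families triangular with respect to the powers $[d(x)]^{i}$ and $[f(x)]^{s}$, and hence invertible against them; this is the feature that drives the whole argument.

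For the forward implication (\ref{a})$+$(\ref{b})$\Rightarrow$(\ref{c}) I would split into the two off-diagonal cases. When $r<n$, substitute the expansion of $D_{r}$ into $J_{r,n}=\int_{a}^{b}\rho\,D_{r}F_{n}\,dx$ to obtain $J_{r,n}=\sum_{i=0}^{r}a_{i}\int_{a}^{b}\rho\,[d]^{i}F_{n}\,dx$; since every index satisfies $i\le r<n$, hypothesis (\ref{a}) annihilates each term and $J_{r,n}=0$. When $r>n$ the symmetric move is required: here one expands $F_{n}$ instead and uses (\ref{b}), so that $J_{r,n}=\sum_{s=0}^{n}b_{s}\int_{a}^{b}\rho\,[f]^{s}D_{r}\,dx$ with every $s\le n<r$, whence (\ref{b}) forces $J_{r,n}=0$. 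For $r=n$, expanding $D_{n}$ leaves only the top term, $J_{n,n}=a_{n}\int_{a}^{b}\rho\,[d]^{n}F_{n}\,dx$, which is nonzero because $a_{n}\neq 0$ and the diagonal integral in (\ref{a}) is nonzero. This yields (\ref{c}).

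For the converse (\ref{c})$\Rightarrow$(\ref{a})$+$(\ref{b}) the key step is to run the expansion in the opposite direction. Because $D_{0},D_{1},\dots,D_{j}$ are polynomials in $d(x)$ of exact degrees $0,1,\dots,j$, the change-of-basis matrix to $1,d,\dots,[d]^{j}$ is lower triangular with nonzero diagonal, hence invertible, so $[d(x)]^{j}=\sum_{r=0}^{j}\alpha_{r}D_{r}(x)$ with $\alpha_{j}\neq 0$. Inserting this gives $\int_{a}^{b}\rho\,[d]^{j}F_{n}\,dx=\sum_{r=0}^{j}\alpha_{r}J_{r,n}$, and by (\ref{c}) only the $r=n$ term can survive: for $j<n$ that term is absent and the integral vanishes, while for $j=n$ one is left with $\alpha_{n}J_{n,n}\neq 0$, which is precisely (\ref{a}). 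Expanding in the dual basis $\{F_{s}\}$ in the same way delivers (\ref{b}).

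The step I expect to be the main obstacle is the converse, specifically justifying the re-expansion $[d]^{j}=\sum_{r\le j}\alpha_{r}D_{r}$ and pinning down the diagonal nonvanishing. One must lean on the definition's requirement that $D_{r}$ have degree exactly $r$ in $d$ (equivalently $a_{r}\neq 0$) to guarantee the basis change is invertible and that $\alpha_{j}\neq 0$, and then combine this with $J_{n,n}\neq 0$ from (\ref{c}) to conclude that the diagonal integral is genuinely nonzero rather than merely possibly nonzero. By contrast the forward direction is routine, once one notices the essential point that the two off-diagonal cases $r<n$ and $r>n$ must be handled by the two different hypotheses (\ref{a}) and (\ref{b}) respectively, rather than by either one alone.
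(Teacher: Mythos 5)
The paper itself does not prove this theorem --- it is quoted from \cite{Konhauser} without proof --- so your argument has to stand on its own merits. Your forward direction does: the triangular expansions $D_r=\sum_{i\le r}a_i[d]^i$ and $F_n=\sum_{s\le n}b_s[f]^s$ with nonzero top coefficients, the case split $r<n$, $r>n$, $r=n$, and the observation that the two off-diagonal cases require the two \emph{different} hypotheses (\ref{a}) and (\ref{b}), constitute a complete and correct proof of (\ref{a})$+$(\ref{b})$\Rightarrow$(\ref{c}); this is also the standard (Konhauser) argument.

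Your converse, however, has a genuine gap: you verify (\ref{a}) only for $j<n$ and $j=n$, whereas (\ref{a}) as printed asserts vanishing for \emph{all} $j\neq n$, in particular for $j>n$. Your own computation shows that this missing case cannot be repaired: for $j>n$ the expansion $[d]^j=\sum_{r=0}^{j}\alpha^{(j)}_rD_r$ together with (\ref{c}) leaves $\int_a^b\rho\,[d]^jF_n\,dx=\alpha^{(j)}_nJ_{n,n}$, and there is no reason for $\alpha^{(j)}_n$ to vanish. Concretely, take $d(x)=f(x)=x$, $\rho(x)=e^{-x^2}$ on $(-\infty,\infty)$ and $D_r=F_r=H_r$ (Hermite polynomials): then (\ref{c}) holds, yet $\int_{-\infty}^{\infty}e^{-x^2}x^{n+2}H_n(x)\,dx>0$, since every coefficient in the Hermite expansion of $x^{n+2}$ is positive; so (\ref{a}) fails at $j=n+2$ and the converse, read literally, is false. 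The explanation is that the statement in this paper is a loose transcription of Konhauser's theorem, in which conditions (\ref{a}) and (\ref{b}) read ``$=0$ for $j=0,1,\dots,n-1$ and $\neq 0$ for $j=n$'' (respectively $j=0,1,\dots,r-1$ and $j=r$). Under that restricted (correct) reading your proof of both implications is complete --- note that your forward argument only ever invokes (\ref{a}) and (\ref{b}) for indices $\le n$ and $\le r$, so it survives the weakening of the hypotheses --- but your assertion that establishing the cases $j\le n$ ``is precisely (\ref{a})'' overclaims. You should either state and prove the theorem in Konhauser's restricted form, or explicitly flag that the printed form of the converse fails for $j>n$.
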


Biorthogonal polynomials emerge as $n$-th degree polynomial solutions of the
differential equation%
\begin{equation*}
A_{1}(x)y^{\prime \prime \prime }+A_{2}(x)y^{\prime \prime
}+A_{3}(x)y^{\prime }=\lambda y
\end{equation*}%
w.r.t. $x^{n}$ and the $n$-th degree polynomial solutions of the adjoint
equation of this equation w.r.t. $x$ \cite{Preiser}. Biorthogonal
polynomials were first described by Fano and Spencer in 1951 \cite{SF}. For
being $\gamma $ is a non-negative integer, they just investigated the
biorthogonality of polynomials in terms of $x$ and $x^{2}$, but not describe
any general properties of the biorthogonal polynomials. They used these
polynomials to calculate the Gamma rays entering matter. In 1965, the most
general properties of the biorthogonal polynomials, such as zeros and the
existence of recurrence relations, were studied by Konhauser \cite{Konhauser}%
. Konhauser defined the self-titled polynomials $Z_{n}^{\left( \gamma
\right) }(x;\upsilon )$ and $Y_{n}^{\left( \gamma \right) }(x;\upsilon )$ in
1967, which are biorthogonal w.r.t. the weight function $x^{\gamma }e^{-x}$
on the interval $\left( 0,\infty \right) $ \cite{Konhauser2}. Al-Salam and
Verma gave the q expansion of these polynomials in 1983 \cite{AlSalamVerma}.

The concept of biorthogonal polynomials was developed by several
mathematicians in the later years. In particular, the families of
biorthogonal polynomials suggested by the classical orthogonal polynomials
have been studied and many properties of these families, such as recurrence
relations and Rodrigues formulas, have been obtained \cite{MT0,MT2}. Spencer
and Fano used a special version of Konhauser polynomials in the calculation
of Gamma rays penetrating objects. In fact, q analogs of these polynomials
have been made and used also in approximation theory \cite{ITV,Seker,Seker2}.

For $\gamma >-1$ and $\upsilon =1,2,...$, the Konhauser polynomials $%
Z_{n}^{\left( \gamma \right) }(x;\upsilon )$ and $Y_{n}^{\left( \gamma
\right) }(x;\upsilon )$ are the following pair of biorthogonal polynomials
w.r.t. the weight function $w\left( x\right) =x^{\gamma }e^{-x}$ on $\left(
0,\infty \right) $ that are suggested by the Laguerre polynomials:%
\begin{equation}
Z_{n}^{\left( \gamma \right) }\left( x;\upsilon \right) =\frac{\Gamma \left(
\upsilon n+\gamma +1\right) }{n!}\sum\limits_{j=0}^{n}\left( -1\right) ^{j}%
\binom{n}{j}\frac{x^{\upsilon j}}{\Gamma \left( \upsilon j+\gamma +1\right) }
\label{5}
\end{equation}%
and%
\begin{equation}
Y_{n}^{\left( \gamma \right) }\left( x;\upsilon \right) =\frac{1}{n!}%
\sum\limits_{r=0}^{n}\frac{x^{r}}{r!}\sum\limits_{j=0}^{r}\left( -1\right)
^{j}\binom{r}{j}\left( \frac{1+\gamma +j}{\upsilon }\right) _{n},  \label{6}
\end{equation}%
where $\upsilon =1,2,3,...$. These two sets have the biorthogonality
relation \cite{Konhauser2}%
\begin{equation}
\int\limits_{0}^{\infty }e^{-x}x^{\gamma }Z_{n}^{\left( \gamma \right)
}\left( x;\upsilon \right) Y_{r}^{\left( \gamma \right) }(x;\upsilon
)dx=\QATOPD\{ . {\frac{\Gamma \left( \upsilon n+\gamma +1\right) }{n!}%
,r=n}{0\ \ \ \ \ \ \ \ \ \ \ \ ,r\neq n}.  \label{Konhauserbiort}
\end{equation}%
Taking account of (\ref{a}), (\ref{b}) and (\ref{Konhauserbiort}), the
following theorem can be given:

\begin{theorem}
Polynomials $Z_{n}^{\left( \gamma \right) }\left( x;\upsilon \right) $ and$\
Y_{r}^{\left( \gamma \right) }(x;\upsilon )$ defined in (\ref{5}) and (\ref%
{6}) satisfy the following biorthogonality relations w.r.t. the weight
function $e^{-x}x^{\gamma }$ on $\left( 0,\infty \right) $, respectively:%
\begin{equation*}
\int\limits_{0}^{\infty }e^{-x}x^{\gamma }Z_{n}^{\left( \gamma \right)
}\left( x;\upsilon \right) x^{j}dx=\QATOPD\{ . {\text{not }0,\ j=n\ \ \ \ \
\ }{0,\ j\neq n}
\end{equation*}%
and%
\begin{equation*}
\int\limits_{0}^{\infty }e^{-x}x^{\gamma }Y_{r}^{\left( \gamma \right)
}(x;\upsilon )x^{\upsilon j}dx=\QATOPD\{ . {\text{not }0,\ j=r\ \ \ \ \ \
}{0,\ j\neq r}.
\end{equation*}
\end{theorem}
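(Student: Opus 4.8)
I would first observe that the two displayed identities are nothing other than hypotheses (\ref{a}) and (\ref{b}) of the equivalence theorem, read through the dictionary $\rho(x)=e^{-x}x^{\gamma}$, $(a,b)=(0,\infty)$, $d(x)=x$, $f(x)=x^{\upsilon}$, $D_{r}=Y_{r}^{(\gamma)}$ and $F_{n}=Z_{n}^{(\gamma)}$. From (\ref{5}) the polynomial $Z_{n}^{(\gamma)}$ is of exact degree $n$ in $x^{\upsilon}$ (its top coefficient being $(-1)^{n}/n!$), so it is the $F_{n}$ attached to the fundamental polynomial $f(x)=x^{\upsilon}$; from (\ref{6}), $Y_{r}^{(\gamma)}$ is of exact degree $r$ in $x$, so it is the $D_{r}$ attached to $d(x)=x$. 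With these choices the first integral is $\int_{0}^{\infty}\rho\,[d(x)]^{j}F_{n}\,dx$ and the second is $\int_{0}^{\infty}\rho\,[f(x)]^{j}D_{r}\,dx$, i.e. precisely the left sides of (\ref{a}) and (\ref{b}).

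The plan is therefore to obtain (\ref{a}) and (\ref{b}) from the converse half of the equivalence theorem, whose hypothesis (\ref{c}) is furnished verbatim by the Konhauser biorthogonality (\ref{Konhauserbiort}). First I would note that, since $\gamma>-1$, the moments (\ref{d}) all converge here because $\int_{0}^{\infty}e^{-x}x^{\gamma+s}\,dx=\Gamma(\gamma+s+1)<\infty$ for each $s\ge 0$, so every integral in sight is well defined and the hypotheses of the equivalence theorem are met; it then delivers (\ref{a}) and (\ref{b}) at once.

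To make this self-contained rather than a bare citation, I would record the mechanism, since it shows where the content sits. Because each $Y_{i}^{(\gamma)}$ has exact degree $i$, for $0\le j<n$ one expands $x^{j}=\sum_{i=0}^{j}a_{i}Y_{i}^{(\gamma)}(x;\upsilon)$ and obtains $\int_{0}^{\infty}\rho\,x^{j}Z_{n}^{(\gamma)}\,dx=\sum_{i=0}^{j}a_{i}\int_{0}^{\infty}\rho\,Y_{i}^{(\gamma)}Z_{n}^{(\gamma)}\,dx=0$ by (\ref{Konhauserbiort}), every index $i\le j<n$ contributing a vanishing off-diagonal term; expanding $x^{\upsilon j}=\sum_{i=0}^{j}b_{i}Z_{i}^{(\gamma)}$ in the basis of polynomials in $x^{\upsilon}$ does the same for (\ref{b}) when $j<r$. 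The one step that is not purely formal --- the main obstacle --- is the nonvanishing on the diagonal: at $j=n$ the expansion $x^{n}=\sum_{i=0}^{n}a_{i}Y_{i}^{(\gamma)}$ has $a_{n}\neq 0$, so $\int_{0}^{\infty}\rho\,x^{n}Z_{n}^{(\gamma)}\,dx=a_{n}\,\Gamma(\upsilon n+\gamma+1)/n!\neq 0$ by the diagonal case of (\ref{Konhauserbiort}), and symmetrically at $j=r$.

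As a cross-check I would alternatively insert (\ref{5}) directly, integrate termwise via $\int_{0}^{\infty}e^{-x}x^{\upsilon k+\gamma+j}\,dx=\Gamma(\upsilon k+\gamma+j+1)$, and reduce the first relation to $\tfrac{\Gamma(\upsilon n+\gamma+1)}{n!}\sum_{k=0}^{n}(-1)^{k}\binom{n}{k}\tfrac{\Gamma(\upsilon k+\gamma+j+1)}{\Gamma(\upsilon k+\gamma+1)}$, where the Gamma-ratio is a degree-$j$ polynomial in $k$. The finite-difference identity $\sum_{k=0}^{n}(-1)^{k}\binom{n}{k}k^{m}=0$ for $m<n$ then annihilates the sum for $j<n$ and leaves the nonzero value $(-1)^{n}\upsilon^{n}\Gamma(\upsilon n+\gamma+1)$ at $j=n$; verifying (\ref{b}) from (\ref{6}) is the same argument carried through the additional inner finite difference, which is where the bookkeeping is heaviest.
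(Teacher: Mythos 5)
Your proposal is correct and follows the same route as the paper: the paper states this theorem as an immediate consequence of the equivalence theorem (the converse direction, from (\ref{c}) to (\ref{a}) and (\ref{b})) applied to the Konhauser relation (\ref{Konhauserbiort}), with exactly your dictionary $\rho(x)=e^{-x}x^{\gamma}$, $d(x)=x$, $f(x)=x^{\upsilon}$, $D_{r}=Y_{r}^{(\gamma)}$, $F_{n}=Z_{n}^{(\gamma)}$, and offers no further argument. Your expansion mechanism and the direct finite-difference cross-check are correct additions that merely make explicit what the paper leaves as a citation.
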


\bigskip

On the other hand, for $p,q>-1$ and $\upsilon =1,2,...$, the pair of
biorthogonal polynomials $J_{n}\left( p,q,\upsilon ;x\right) $ and $%
K_{n}\left( p,q,\upsilon ;x\right) $ suggested by the Jacobi polynomials, is
defined by%
\begin{equation*}
J_{n}\left( p,q,\upsilon ;x\right) =\frac{\left( 1+p\right) _{\upsilon n}}{n!%
}\sum\limits_{j=0}^{n}\left( -1\right) ^{j}\binom{n}{j}\frac{\left(
1+p+q+n\right) _{\upsilon j}}{\left( 1+p\right) _{\upsilon j}}\left( \frac{%
1-x}{2}\right) ^{\upsilon j}
\end{equation*}%
and%
\begin{equation*}
K_{n}\left( p,q,\upsilon ;x\right)
=\sum\limits_{r=0}^{n}\sum\limits_{s=0}^{r}\frac{\left( -1\right)
^{r+s}\left( 1+q\right) _{n}}{n!r!\left( 1+q\right) _{n-r}}\binom{r}{s}%
\left( \frac{s+p+1}{\upsilon }\right) _{n}\left( \frac{x-1}{2}\right)
^{r}\left( \frac{x+1}{2}\right) ^{n-r}.
\end{equation*}%
The corresponding biorthogonality relation is of form%
\begin{eqnarray*}
&&\int\limits_{-1}^{1}\left( 1-x\right) ^{p}\left( 1+x\right)
^{q}J_{n}\left( p,q,\upsilon ;x\right) K_{r}\left( p,q,\upsilon ;x\right) dx
\\
&=&\frac{2^{p+q+1}\Gamma \left( p+\upsilon n+1\right) \Gamma \left(
q+n+1\right) }{n!\Gamma \left( p+q+n+1\right) \left( p+q+\upsilon
n+n+1\right) }\delta _{n,r}.
\end{eqnarray*}%
For $\upsilon =1$, both polynomials $J_{n}\left( p,q,\upsilon ;x\right) $
and $K_{n}\left( p,q,\upsilon ;x\right) $ reduce to the classical Jacobi
polynomials and are called as biorthogonal polynomials suggested by the
Jacobi polynomials.

Also, the polynomials $J_{n}\left( p,q,\upsilon ;x\right) $ satisfy the
following recurrence relations:%
\begin{equation}
\left( x-1\right) DJ_{n}\left( p,q,\upsilon ;x\right) =\upsilon nJ_{n}\left(
p,q,\upsilon ;x\right) -\upsilon \left( \upsilon n-\upsilon +p+1\right)
_{\upsilon }\ J_{n-1}\left( p,q+1,\upsilon ;x\right)  \label{Jrec1}
\end{equation}%
and%
\begin{equation}
\left( x-1\right) DJ_{n}\left( p,q,\upsilon ;x\right) =\left( \upsilon
n+p\right) J_{n}\left( p-1,q+1,\upsilon ;x\right) -p\ J_{n}\left(
p,q,\upsilon ;x\right) .  \label{Jrec2}
\end{equation}

\bigskip

The idea of constructing biorthogonal pairs of polynomials corresponding to
the weight functions of classical orthogonal polynomials was found out by
Konhauser \cite{Konhauser2} for the Laguerre weight function $x^{\gamma
}e^{-x}$, by Toscano \cite{Toscano}, Carlitz \cite{Carlitz3} and
Madhekar\&Thakare \cite{MT} for the Jacobi weight function $(1-x)^{\alpha
}(1+x)^{\beta }$ and by Madhekar\&Thakare \cite{MT2} for the Hermite weight
function $\exp \left( -x^{2}\right) $. For $\mu >-1/2$, the Szeg\"{o}%
-Hermite polynomials $H_{n}^{\left( \mu \right) }(x)$ are biorthogonal
w.r.t. Szeg\"{o}-Hermite weight function $\left\vert x\right\vert ^{2\mu
}exp(-x^{2})$ over the interval $(-\infty ,\infty )$ and these have been
experienced to be useful in connection with Gauss-Jacobi mechanical
quadrature \cite{S}. For $\mu =0$, Szeg\"{o}-Hermite polynomials are just
the classical Hermite polynomials.

\bigskip

On the other hand, finite orthogonal polynomials in one variable and two
variables have been studied in many papers in the literature \cite%
{GAM,GAM2,GA,GL4,KM,Masjed}. In theory of orthogonal polynomials, the term
"finite" is used when the degree of the polynomial is limited depending on
one or more parameters of the polynomial.

One of well-known finite orthogonal polynomials in one variable is the
polynomials $M_{n}^{\left( p,q\right) }\left( x\right) $, which is one of
the three finite solutions of the equation%
\begin{equation*}
x\left( x+1\right) y_{n}^{\prime \prime }\left( x\right) +\left( \left(
2-p\right) x+1+q\right) y_{n}^{\prime }\left( x\right) -n\left( n+1-p\right)
y_{n}\left( x\right) =0.
\end{equation*}%
These polynomials, defined by%
\begin{equation*}
M_{n}^{\left( p,q\right) }\left( x\right) =\left( -1\right)
^{n}n!\sum_{j=0}^{n}\binom{p-n-1}{j}\binom{q+n}{n-j}\left( -x\right) ^{j},
\end{equation*}%
satisfy the orthogonality relation%
\begin{equation*}
\int\limits_{0}^{\infty }x^{q}\left( 1+x\right) ^{-\left( p+q\right)
}M_{r}^{\left( p,q\right) }\left( x\right) M_{n}^{\left( p,q\right) }\left(
x\right) dx=\frac{n!\Gamma \left( p-n\right) \Gamma \left( q+n+1\right) }{%
\left( p-2n-1\right) \Gamma \left( p+q-n\right) }\delta _{r,n}
\end{equation*}%
under the conditions that $p>2\left\{ \max n\right\} +1$ and$\ q>-1$\ \cite%
{Masjed}. $\delta _{r,n}$ is the Kronecker delta.

\bigskip

In this study, inspired by papers \cite{Konhauser2} and \cite{MT}, we define
a pair of finite biorthogonal polynomials, namely $M_{n}\left( p,q,\upsilon
;x\right) $ and $\mathfrak{M}_{n}\left( p,q,\upsilon ;x\right) $, suggested
by finite orthogonal polynomials $M_{n}^{\left( p,q\right) }\left( x\right) $
and obtain the corresponding biorthogonality relation. By this means, the
notion of finitude is transferred to the theory of biorthogonal polynomials
for the first time in this paper. Also, it is shown that the new families
are related to the biorthogonal polynomials suggested by the Jacobi
polynomials. Moreover, a limit relation between this finite set and the
Konhauser polynomials is introduced. Several properties such as recurrence
relation, generating function and differential equation are presented. Next,
computing its Fourier transform, a new family of finite biorthogonal
functions is derived via Parseval's identity. In addition to integral and
operational representation, Laplace transform and Riemann-Liouville
fractional calculus operators of $M_{n}\left( p,q,\upsilon ;x\right) $ are
considered.

\section{A pair of finite biorthogonal polynomials}

In this paper we introduce a pair of biorthogonal polynomials suggested by
the finite univariate orthogonal polynomials $M_{n}^{\left( p,q\right)
}\left( x\right) $.

For $p>\left( \upsilon +1\right) N+1$, $q>-1$, $N=\max \left\{ n\right\} $, $%
n=0,1,2,...$ and $\upsilon =0,1,2,...$, $M_{n}\left( p,q,\upsilon ;x\right) $
and $\mathfrak{M}_{n}\left( p,q,\upsilon ;x\right) $ are the polynomials of
degree $n$ in $x^{\upsilon }$ and $x$, respectively. These two polynomial
sets satisfy the following biorthogonality condition w.r.t. the weight
function $x^{q}\left( 1+x\right) ^{-\left( p+q\right) }$, namely%
\begin{equation}
\int\limits_{0}^{\infty }x^{q}\left( 1+x\right) ^{-\left( p+q\right)
}M_{n}\left( p,q,\upsilon ;x\right) x^{j}dx=\left\{ \QATOP{0\ \text{for}\
j\neq n}{\text{not}\ 0\text{ for}\ j=n\ \ \ \ \ \ }\right. ,  \label{1}
\end{equation}%
and%
\begin{equation}
\int\limits_{0}^{\infty }x^{q}\left( 1+x\right) ^{-\left( p+q\right) }%
\mathfrak{M}_{n}\left( p,q,\upsilon ;x\right) x^{\upsilon j}dx=\left\{ 
\QATOP{0\ \text{for}\ j\neq n}{\text{not}\ 0\text{ for}\ j=n\ \ \ \ \ \ }%
\right. .  \label{2}
\end{equation}%
It follows from (\ref{1}) and (\ref{2}) that%
\begin{equation*}
\int\limits_{0}^{\infty }x^{q}\left( 1+x\right) ^{-\left( p+q\right)
}M_{n}\left( p,q,\upsilon ;x\right) \mathfrak{M}_{r}\left( p,q,\upsilon
;x\right) dx=\left\{ \QATOP{0\ \text{for}\ r\neq n}{\text{not}\ 0\text{ for}%
\ r=n\ \ \ \ \ \ }\right.
\end{equation*}%
and conversely, for $p>\left( \upsilon +1\right) N+1,\ q>-1,N=\max \left\{
n,r\right\} .$

\begin{remark}
For $\upsilon =1$ both these sets reduce to the sets of the finite
univariate orthogonal polynomails $M_{n}^{\left( p,q\right) }\left( x\right) 
$.\bigskip
\end{remark}

Madhekar and Thakare \cite{MT} introduced the biorthogonal polynomials
suggested by the Jacobi polynomials with the help of the following
connection between the classical Jacobi polynomials $P_{n}^{\left(
p,q\right) }\left( x\right) $ and the Laguerre polynomials $L_{n}^{\left(
p\right) }\left( x\right) $:%
\begin{equation}
\Gamma \left( n+p+q+1\right) P_{n}^{\left( p,q\right) }\left( x\right)
=\int\limits_{0}^{\infty }t^{n+p+q}e^{-t}L_{n}^{\left( p\right) }\left( 
\frac{1-x}{2}t\right) dt.  \label{7}
\end{equation}%
Similarly, we consider the relation (\ref{7}) to generate the finite
biorthogonal pair.\bigskip

On the other hand, let us recall the transition between the classical
orthogonal Jacobi polynomials and the finite univariate orthogonal
polynomials $M_{n}^{\left( p,q\right) }\left( x\right) $ of form \cite%
{Masjed}:%
\begin{eqnarray}
M_{n}^{\left( p,q\right) }\left( x\right) =\left( -1\right)
^{n}n!P_{n}^{\left( q,-p-q\right) }\left( 2x+1\right) &&  \label{7.} \\
\Leftrightarrow P_{n}^{\left( p,q\right) }\left( x\right) =\frac{\left(
-1\right) ^{n}}{n!}M_{n}^{\left( -p-q,p\right) }\left( \frac{x-1}{2}\right)
. &&  \notag
\end{eqnarray}%
By considering (\ref{7}), the results in (\ref{7.}) enable us to introduce
the first set from the pair of biorthogonal polynomials, called $M_{n}\left(
p,q,\upsilon ;x\right) $ and $\mathfrak{M}_{n}\left( p,q,\upsilon ;x\right) $%
, suggested by the finite univariate orthogonal polynomials $M_{n}^{\left(
p,q\right) }\left( x\right) $.

Let us define the first set $M_{n}\left( p,q,\upsilon ;x\right) $ by%
\begin{equation}
\frac{\left( -1\right) ^{n}}{n!}\Gamma \left( n+p+q+1\right) M_{n}\left(
-p-q,p,\upsilon ;x\right) =\int\limits_{0}^{\infty
}t^{n+p+q}e^{-t}Z_{n}^{\left( p\right) }\left( -xt;\upsilon \right) dt.
\label{8}
\end{equation}%
From (\ref{5}) and (\ref{8}), we can easily construct%
\begin{equation}
M_{n}\left( p,q,\upsilon ;x\right) =\left( -1\right) ^{n}\left( q+1\right)
_{\upsilon n}\sum\limits_{j=0}^{n}\left( -1\right) ^{j}\binom{n}{j}\frac{%
\left( n+1-p\right) _{\upsilon j}}{\left( q+1\right) _{\upsilon j}}\left(
-x\right) ^{\upsilon j}.  \label{9}
\end{equation}%
In fact, $M_{n}\left( p,q,\upsilon ;x\right) $ can be expressed in terms of
the generalized hypergeometric functions of form%
\begin{equation*}
M_{n}\left( p,q,\upsilon ;x\right) =\left( -1\right) ^{n}\left( q+1\right)
_{\upsilon n}\ _{\upsilon +1}F_{\upsilon }\left[ \QATOP{-n,\ \Delta \left(
\upsilon ,n+1-p\right) }{\Delta \left( \upsilon ,q+1\right) }\left(
-x\right) ^{\upsilon }\right] ,
\end{equation*}%
where $\Delta \left( \upsilon ,\gamma \right) $ states the sequence of $%
\upsilon $ parameters $\frac{\gamma }{\upsilon },\frac{\gamma +1}{\upsilon }%
,...,\frac{\gamma +\upsilon -1}{\upsilon },\ \upsilon \geq 1.$

Now, we think the following explicit serial representation of form%
\begin{equation}
\mathfrak{M}_{n}\left( p,q,\upsilon ;x\right)
=\sum\limits_{r=0}^{n}\sum\limits_{s=0}^{r}\left( -1\right) ^{s+n}\binom{r}{s%
}\frac{\left( p+q-n\right) _{r}}{r!}\left( \frac{s+q+1}{\upsilon }\right)
_{n}x^{r}\left( 1+x\right) ^{n-r},  \label{11}
\end{equation}%
as the second set of the pair of the finite biorthogonal polynomials
suggested by the finite orthogonal polynomials $M_{n}^{\left( p,q\right)
}\left( x\right) $.

\begin{remark}
Both $M_{n}\left( p,q,\upsilon ;x\right) $ and $\mathfrak{M}_{n}\left(
p,q,\upsilon ;x\right) $ get reduced for $\upsilon =1$ to the finite
univariate orthogonal polynomials $M_{n}^{\left( p,q\right) }\left( x\right) 
$.
\end{remark}

\section{Biorthogonality}

\begin{theorem}
The pair of polynomials $M_{n}\left( p,q,\upsilon ;x\right) $ and $\mathfrak{%
M}_{m}\left( p,q,\upsilon ;x\right) $ given by (\ref{9}) and (\ref{11}) are
finite biorthogonal and satisfies the finite biorthogonality relation%
\begin{equation}
\int\limits_{0}^{\infty }\frac{x^{q}}{\left( 1+x\right) ^{p+q}}M_{n}\left(
p,q,\upsilon ;x\right) \mathfrak{M}_{m}\left( p,q,\upsilon ;x\right) dx=%
\frac{n!\Gamma \left( p-n\right) \Gamma \left( q+1+\upsilon n\right) \delta
_{n,m}}{\left( p-1-n-\upsilon n\right) \Gamma \left( p+q-n\right) },
\label{Mort}
\end{equation}%
for $p>\left( \upsilon +1\right) N+1,\ N=\max \left\{ n,m\right\} $ and $%
q>-1 $.
\end{theorem}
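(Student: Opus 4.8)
The plan is to deduce (\ref{Mort}) from the characterization theorem built on (\ref{a})--(\ref{c}): taking the fundamental polynomials $f(x)=x$ and $d(x)=x^{\upsilon }$, the family $M_{n}(p,q,\upsilon ;x)$ is tested against the powers $x^{j}$ and $\mathfrak{M}_{m}(p,q,\upsilon ;x)$ against the powers $x^{\upsilon j}$. It therefore suffices to establish (\ref{1}) and (\ref{2}) for $0\le j<n$; the theorem then yields the off-diagonal vanishing encoded by $\delta _{n,m}$, and the diagonal $n=m$ supplies the constant. The one analytic tool needed everywhere is the Euler beta integral $\int_{0}^{\infty }x^{a-1}(1+x)^{-a-b}\,dx=\Gamma (a)\Gamma (b)/\Gamma (a+b)$, and the hypotheses $q>-1$, $p>(\upsilon +1)N+1$ are precisely what force $a,b>0$ in every term that arises.

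To prove (\ref{1}) I would substitute the series (\ref{9}) and integrate term by term against $x^{j}$. In each summand the two resulting gamma arguments add up to $p+q$, so $\Gamma (p+q)$ factors out; the remaining $k$-dependence is simplified by $\Gamma (q+\upsilon k+j+1)/(q+1)_{\upsilon k}=\Gamma (q+1)\,(q+1+\upsilon k)_{j}$ and by the reflection $(n+1-p)_{\upsilon k}=(-1)^{\upsilon k}\Gamma (p-n)/\Gamma (p-n-\upsilon k)$, which for $j<n$ converts the decreasing gamma factor into the polynomial $(p-n-\upsilon k)_{n-j-1}$. The two factors $(-1)^{\upsilon k}$ cancel and the sum becomes $\sum_{k=0}^{n}(-1)^{k}\binom{n}{k}\,(q+1+\upsilon k)_{j}(p-n-\upsilon k)_{n-j-1}$, whose summand is a polynomial in $k$ of degree $j+(n-j-1)=n-1$. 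Since $\sum_{k=0}^{n}(-1)^{k}\binom{n}{k}Q(k)=0$ whenever $\deg Q<n$, the integral vanishes for $0\le j<n$. Relation (\ref{2}) follows from (\ref{11}) by the same mechanism, first performing the inner $s$-sum as a finite difference.

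For the normalizing constant I would invoke (\ref{2}) to annihilate every term of $M_{n}$ below the top: since $\mathfrak{M}_{n}$ is orthogonal to $x^{\upsilon j}$ for $j<n$ and the leading coefficient of $M_{n}$ in $x^{\upsilon n}$ equals $(-1)^{\upsilon n}(n+1-p)_{\upsilon n}=\Gamma (p-n)/\Gamma (p-n-\upsilon n)$, the diagonal integral collapses to that coefficient times $\int_{0}^{\infty }x^{q+\upsilon n}(1+x)^{-(p+q)}\mathfrak{M}_{n}(p,q,\upsilon ;x)\,dx$. Expanding (\ref{11}) and applying the beta integral once more, the factor $\Gamma (p-n-\upsilon n-1)/\Gamma (p-n-\upsilon n)=1/(p-1-n-\upsilon n)$ appears automatically and already accounts for the claimed denominator, so it remains to prove the pole-free identity
\begin{equation*}
\sum_{r=0}^{n}\sum_{s=0}^{r}(-1)^{s+n}\binom{r}{s}\frac{(p+q-n)_{r}}{r!}\left( \frac{s+q+1}{\upsilon }\right) _{n}\frac{\Gamma (q+\upsilon n+r+1)}{\Gamma (p+q-n+r)}=\frac{n!\,\Gamma (q+1+\upsilon n)}{\Gamma (p+q-n)}.
\end{equation*}

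Evaluating this double sum is the step I expect to be the main obstacle. The natural route is to carry out the inner $s$-sum first, recognizing $\sum_{s=0}^{r}(-1)^{s}\binom{r}{s}(\tfrac{s+q+1}{\upsilon })_{n}$ as a finite difference of a degree-$n$ polynomial, and then to resolve the $r$-sum by a terminating Chu--Vandermonde or Saalsch\"{u}tz summation; a convenient auxiliary identity throughout is $\sum_{k=0}^{n}(-1)^{k}\binom{n}{k}/(z+k)=n!/(z)_{n+1}$, which also governs the single-sum variant obtained if one instead reduces $\mathfrak{M}_{n}$ to its top power and computes $\int_{0}^{\infty }x^{q}(1+x)^{-(p+q)}M_{n}x^{n}\,dx$ directly. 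As a check, at $\upsilon =1$ both families degenerate to $M_{n}^{(p,q)}$ and (\ref{Mort}) reduces to the known norm $n!\Gamma (p-n)\Gamma (q+n+1)/[(p-2n-1)\Gamma (p+q-n)]$ quoted in the introduction.
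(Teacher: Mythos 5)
You take a genuinely different route from the paper: the paper expands the product $M_{n}\mathfrak{M}_{m}$ directly, applies the beta integral, and collapses the resulting double $(r,s)$-sum at once with an identity of Carlitz, whereas you go through the moment conditions (\ref{1})--(\ref{2}) and Konhauser's characterization theorem, handling the diagonal by stripping $M_{n}$ down to its leading term. Your treatment of (\ref{1}) is complete and correct: the beta-integral bookkeeping, the reduction to $\sum_{k=0}^{n}(-1)^{k}\binom{n}{k}(q+1+\upsilon k)_{j}\,(p-n-\upsilon k)_{n-j-1}$, and the annihilation of polynomials of degree $<n$ by the alternating sum are all right, and the parameter restrictions are used exactly where needed. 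The reduction of the diagonal case to your displayed double-sum identity is also correct, and that identity is in fact true and equivalent to the constant in (\ref{Mort}).

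The genuine gap is that this identity is left unproved, and the tools you nominate for it are not the ones that work. For $\upsilon >1$ and $r<n$ the inner difference $\sum_{s=0}^{r}(-1)^{s}\binom{r}{s}\bigl(\tfrac{s+q+1}{\upsilon}\bigr)_{n}$ admits no Chu--Vandermonde or Saalsch\"{u}tz evaluation (it is an $r$-th difference of a degree-$n$ polynomial, hence a polynomial of degree $n-r$, not a ratio of Pochhammer symbols), and after the beta integral the outer $r$-sum carries the weight $\binom{q+\upsilon n+r}{r}$ rather than an alternating $(-1)^{r}\binom{n}{r}$, so the degree-counting trick that worked for (\ref{1}) does not transfer --- which is also why your one-sentence claim that (\ref{2}) follows ``by the same mechanism'' is unsubstantiated. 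What closes both gaps is the observation that $\frac{(p+q-n)_{r}}{r!}\cdot \frac{\Gamma (q+\upsilon n+r+1)}{\Gamma (p+q-n+r)}=\binom{q+\upsilon n+r}{r}\frac{\Gamma (q+\upsilon n+1)}{\Gamma (p+q-n)}$, so all $p$-dependence cancels and your identity reduces to $\sum_{r=0}^{n}\binom{q+\upsilon n+r}{r}\sum_{s=0}^{r}(-1)^{s}\binom{r}{s}f(s)=(-1)^{n}n!$ with $f(s)=\bigl(\tfrac{s+q+1}{\upsilon}\bigr)_{n}$. This is precisely Carlitz's identity --- the key lemma of the paper's own proof --- which, since $\binom{-x+r-1}{r}=(-1)^{r}\binom{x}{r}$, is nothing but Newton's forward-difference interpolation formula, and it returns $f(-q-\upsilon n-1)=(-n)_{n}=(-1)^{n}n!$, i.e.\ exactly your right-hand side. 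The same identity with $m$ in place of $n$, evaluated at $-q-\upsilon j-1$, gives $(-j)_{m}$, which vanishes for $j<m$ and thereby proves (\ref{2}). So your outline is salvageable, but only by importing the very Carlitz/Newton identity on which the paper's argument rests; as written, both (\ref{2}) and the diagonal constant remain unestablished.
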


\begin{proof}
\begin{eqnarray}
&&\int\limits_{0}^{\infty }x^{q}\left( 1+x\right) ^{-\left( p+q\right)
}M_{n}\left( p,q,\upsilon ;x\right) \mathfrak{M}_{m}\left( p,q,\upsilon
;x\right) dx  \label{biortrel} \\
&=&\left( -1\right) ^{n}\left( q+1\right) _{\upsilon
n}\sum\limits_{j=0}^{n}\left( -1\right) ^{j+\upsilon j}\binom{n}{j}\frac{%
\left( n+1-p\right) _{\upsilon j}}{\left( q+1\right) _{\upsilon j}}  \notag
\\
&&\times \sum\limits_{r=0}^{m}\sum\limits_{s=0}^{r}\left( -1\right) ^{s+m}%
\binom{r}{s}\frac{\left( p+q-m\right) _{r}}{r!}\left( \frac{s+q+1}{\upsilon }%
\right) _{m}\int\limits_{0}^{\infty }x^{q+\upsilon j+r}\left( 1+x\right)
^{m-r-p-q}dx  \notag \\
&=&\frac{\left( -1\right) ^{n+m}\Gamma \left( q+1+\upsilon n\right) \Gamma
\left( p-1-m\right) }{\Gamma \left( p+q-m\right) }\sum\limits_{j=0}^{n}%
\left( -1\right) ^{j}\binom{n}{j}\frac{\left( n+1-p\right) _{\upsilon j}}{%
\left( m+2-p\right) _{\upsilon j}}  \notag \\
&&\times \sum\limits_{r=0}^{m}\binom{q+\upsilon j+r}{r}\sum\limits_{s=0}^{r}%
\left( -1\right) ^{s}\binom{r}{s}\left( \frac{s+q+1}{\upsilon }\right) _{m}.
\notag
\end{eqnarray}%
By using the following result given by Carlitz \cite[p. 429]{Carlitz2}%
\begin{equation*}
\left( \frac{x+\alpha +1}{\upsilon }\right) _{n}=\sum\limits_{r=0}^{n}\binom{%
-x+r-1}{r}\sum\limits_{s=0}^{r}\left( -1\right) ^{s}\binom{r}{s}\left( \frac{%
s+\alpha +1}{\upsilon }\right) _{n},
\end{equation*}%
(\ref{biortrel}) becomes%
\begin{eqnarray*}
&&\int\limits_{0}^{\infty }x^{q}\left( 1+x\right) ^{-\left( p+q\right)
}M_{n}\left( p,q,\upsilon ;x\right) \mathfrak{M}_{m}\left( p,q,\upsilon
;x\right) dx \\
&=&\frac{\left( -1\right) ^{n+m}\Gamma \left( q+1+\upsilon n\right) \Gamma
\left( p-1-m\right) }{\Gamma \left( p+q-m\right) }\sum\limits_{j=0}^{n}%
\left( -1\right) ^{j}\binom{n}{j}\left( -j\right) _{m}\frac{\left(
n+1-p\right) _{\upsilon j}}{\left( m+2-p\right) _{\upsilon j}} \\
&=&\left( -1\right) ^{n+m+1}\frac{\Gamma \left( q+1+\upsilon n\right) \Gamma
\left( p-m\right) m!}{\Gamma \left( p+q-m\right) }\binom{n}{m}%
\sum\limits_{j=0}^{n-m}\left( -1\right) ^{j}\binom{n-m}{j}\frac{\left(
n+1-p\right) _{\upsilon \left( j+m\right) }}{\left( m+1-p\right) _{\upsilon
\left( j+m\right) }} \\
&=&\frac{m!\Gamma \left( p-m\right) \Gamma \left( q+1+\upsilon n\right)
\left( 1-p\right) _{m}}{\left( -1\right) ^{n+m+1}\Gamma \left( p+q-m\right)
\left( 1-p\right) _{n}}\binom{n}{m}\sum\limits_{j=0}^{n-m}\left( -1\right)
^{j}\binom{n-m}{j}\frac{\left( 1-p\right) _{n+\upsilon \left( j+m\right) }}{%
\left( 1-p\right) _{m+\upsilon \left( j+m\right) +1}} \\
&=&\frac{m!\Gamma \left( p-m\right) \Gamma \left( q+1+\upsilon n\right)
\left( 1-p\right) _{m}}{\left( -1\right) ^{n+m+1}\Gamma \left( p+q-m\right)
\left( 1-p\right) _{n}}\binom{n}{m}\sum\limits_{j=0}^{n-m}\left( -1\right)
^{j}\binom{n-m}{j}D^{n-m-1}x^{n-p+\upsilon \left( j+m\right) }\mid _{x=1} \\
&=&\QATOPD\{ . {\ \ \ 0,\ \ m\neq n}{\neq 0,\ \ m=n}.
\end{eqnarray*}
\end{proof}

\section{Some useful relations}

\begin{lemma}
The following limit relations hold between the Konhauser polynomials and the
finite biorthogonal polynomials:%
\begin{equation}
\left\{ \QATOP{\lim\limits_{p\rightarrow \infty }M_{n}\left( p,q,\upsilon ;%
\frac{x}{p}\right) =\left( -1\right) ^{n}n!Z_{n}^{\left( q\right) }\left(
x;\upsilon \right) ,}{\lim\limits_{p\rightarrow \infty }\mathfrak{M}%
_{n}\left( p,q,\upsilon ;\frac{x}{p}\right) =\left( -1\right)
^{n}n!Y_{n}^{\left( q\right) }\left( x;\upsilon \right) .}\right.  \label{12}
\end{equation}
\end{lemma}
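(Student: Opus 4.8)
The plan is to work directly from the explicit series representations (\ref{9}) and (\ref{11}), substitute $x\mapsto x/p$, and then pass to the limit $p\to\infty$ term by term. Since both sums are \emph{finite} (the outer index never exceeds $n$, regardless of $p$), there is no convergence subtlety whatsoever: the limit of a finite sum is the sum of the limits of its individual terms, so the whole argument reduces to computing the limit of each coefficient as $p\to\infty$.

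First I would establish the relation for $M_{n}$. Writing out $M_{n}\!\left(p,q,\upsilon;\tfrac{x}{p}\right)$ from (\ref{9}), the only $p$-dependent factor in the $j$-th summand is $(n+1-p)_{\upsilon j}(-x/p)^{\upsilon j}$. I would extract the asymptotics of the Pochhammer symbol by dividing out $p^{\upsilon j}$: each of its $\upsilon j$ factors satisfies $(n+k-p)/p\to -1$, so $(n+1-p)_{\upsilon j}/p^{\upsilon j}\to(-1)^{\upsilon j}$. Combining this with $(-x/p)^{\upsilon j}=(-1)^{\upsilon j}x^{\upsilon j}/p^{\upsilon j}$, the two sign factors cancel and the $j$-th term tends to $(-1)^{j}\binom{n}{j}x^{\upsilon j}/(q+1)_{\upsilon j}$, apart from the common prefactor $(-1)^{n}(q+1)_{\upsilon n}$.

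It then remains to recognise the resulting series as a Konhauser polynomial. Using $(q+1)_{m}=\Gamma(q+1+m)/\Gamma(q+1)$, the ratio $(q+1)_{\upsilon n}/(q+1)_{\upsilon j}$ collapses to $\Gamma(\upsilon n+q+1)/\Gamma(\upsilon j+q+1)$; pulling $\Gamma(\upsilon n+q+1)$ out of the sum reproduces precisely the definition (\ref{5}) of $Z_{n}^{(q)}(x;\upsilon)$ up to the factor $(-1)^{n}n!$, which is exactly the asserted identity. For the second relation I would proceed identically with (\ref{11}) under $x\mapsto x/p$: here the $p$-dependent part of the $(r,s)$-term is $(p+q-n)_{r}(x/p)^{r}(1+x/p)^{n-r}$, and the elementary limits $(p+q-n)_{r}/p^{r}\to 1$ and $(1+x/p)^{n-r}\to 1$ leave the term-wise limit $(-1)^{s+n}\binom{r}{s}\left(\tfrac{s+q+1}{\upsilon}\right)_{n}x^{r}/r!$, which after relabelling $s\to j$ is exactly $(-1)^{n}n!\,Y_{n}^{(q)}(x;\upsilon)$ from (\ref{6}).

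I do not expect any genuine obstacle in this lemma; the finiteness of the sums removes the usual analytic difficulties. The only points demanding care are purely computational: the correct bookkeeping of the sign $(-1)^{\upsilon j}$ arising from the cancellation in the first part, and the accurate conversion between Pochhammer symbols and Gamma functions needed to match the normalising prefactors $\Gamma(\upsilon n+q+1)$ and $n!$.
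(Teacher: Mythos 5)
Your proof is correct, and in fact the paper states this lemma with no proof at all, so your argument supplies exactly what the authors left to the reader. The term-by-term limit computation is the natural (and surely intended) route: the sums in (\ref{9}) and (\ref{11}) are finite, the key asymptotics $(n+1-p)_{\upsilon j}/p^{\upsilon j}\to(-1)^{\upsilon j}$, $(p+q-n)_{r}/p^{r}\to 1$ and $(1+x/p)^{n-r}\to 1$ are elementary, and your sign cancellation $(-1)^{\upsilon j}\cdot(-1)^{\upsilon j}=1$ together with the conversion $(q+1)_{\upsilon n}/(q+1)_{\upsilon j}=\Gamma(\upsilon n+q+1)/\Gamma(\upsilon j+q+1)$ reproduces $(-1)^{n}n!\,Z_{n}^{(q)}(x;\upsilon)$ and $(-1)^{n}n!\,Y_{n}^{(q)}(x;\upsilon)$ exactly as defined in (\ref{5}) and (\ref{6}).
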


\begin{remark}
For $\upsilon =1$, each of (\ref{12}) reduces to the well known limit
relations in \cite[p.17]{Masjed2}, between the Laguerre polynomials and the
finite univariate orthogonal polynomials $M_{n}^{\left( p,q\right) }\left(
x\right) $.
\end{remark}

\begin{lemma}
We have the following transitions between the pair of biorthogonal
polynomials given by (\ref{9}) and (\ref{11}), and the set of the
biorthogonal polynomials suggested by the Jacobi polynomials:%
\begin{eqnarray}
M_{n}\left( p,q,\upsilon ;x\right) =\left( -1\right) ^{n}n!J_{n}\left(
q,-p-q,\upsilon ;2x+1\right) &&  \label{JM} \\
\Leftrightarrow J_{n}\left( p,q,\upsilon ;x\right) =\frac{\left( -1\right)
^{n}}{n!}M_{n}\left( q,-p-q,\upsilon ;\frac{x-1}{2}\right) &&  \notag
\end{eqnarray}%
and%
\begin{eqnarray}
\mathfrak{M}_{n}\left( p,q,\upsilon ;x\right) =\left( -1\right)
^{n}n!K_{n}\left( q,-p-q,\upsilon ;2x+1\right) &&  \label{KM} \\
\Leftrightarrow K_{n}\left( p,q,\upsilon ;x\right) =\frac{\left( -1\right)
^{n}}{n!}\mathfrak{M}_{n}\left( q,-p-q,\upsilon ;\frac{x-1}{2}\right) . && 
\notag
\end{eqnarray}
\end{lemma}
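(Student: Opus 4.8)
The plan is to prove both identities by direct substitution of the parameter replacement $(p,q)\mapsto(q,-p-q)$ together with the affine change of variable $x\mapsto 2x+1$ into the explicit hypergeometric series defining $J_{n}$ and $K_{n}$, and then matching the resulting series term by term against the defining series (\ref{9}) and (\ref{11}). This is the biorthogonal ($\upsilon$-analogue) counterpart of the classical transition (\ref{7.}), so the structure of the argument parallels that identity exactly; no integral representation or biorthogonality is needed, only algebraic manipulation of the coefficients.

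For the first relation I would insert $p\to q$, $q\to -p-q$, $x\to 2x+1$ into the series for $J_{n}(p,q,\upsilon;x)$. The only facts required are the two elementary simplifications $1+q+(-p-q)+n=n+1-p$ and $\tfrac{1-(2x+1)}{2}=-x$; with these, the prefactor $(1+q)_{\upsilon n}$, the ratio $(n+1-p)_{\upsilon j}/(q+1)_{\upsilon j}$, and the power $(-x)^{\upsilon j}$ all appear verbatim (recalling $(q+1)_{\upsilon n}=(1+q)_{\upsilon n}$). Multiplying by $(-1)^{n}n!$ then reproduces (\ref{9}) identically, which establishes $M_{n}(p,q,\upsilon;x)=(-1)^{n}n!\,J_{n}(q,-p-q,\upsilon;2x+1)$.

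For the second relation I would carry out the same substitution in the double series for $K_{n}$, using $1+(-p-q)=1-p-q$, $\tfrac{(2x+1)-1}{2}=x$ and $\tfrac{(2x+1)+1}{2}=x+1$; the factor $\left(\tfrac{s+q+1}{\upsilon}\right)_{n}$ is already in the form appearing in (\ref{11}). After multiplying by $(-1)^{n}n!$, every factor matches (\ref{11}) except the Pochhammer prefactor, so the single nontrivial point is the identity
\begin{equation*}
(-1)^{r}\,\frac{(1-p-q)_{n}}{(1-p-q)_{n-r}}=(p+q-n)_{r}.
\end{equation*}
I would verify this by writing $(a)_{n}/(a)_{n-r}=\prod_{k=0}^{r-1}(a+n-r+k)$ with $a=1-p-q$ and reversing the order of the $r$ factors, or equivalently by the standard reflection $(a)_{n}/(a)_{n-r}=(-1)^{r}(1-a-n)_{r}$. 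This reconciliation of the prefactors is the only step demanding more than routine bookkeeping, so it is the main obstacle; once it is in hand the two double series coincide and $\mathfrak{M}_{n}(p,q,\upsilon;x)=(-1)^{n}n!\,K_{n}(q,-p-q,\upsilon;2x+1)$ follows.

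Finally, the two equivalent inverse relations require no further computation: they are obtained by inverting the (invertible, affine) parameter map $(p,q)\mapsto(q,-p-q)$ together with the change of variable $x\mapsto 2x+1$. Since the inverse substitution sends $(p,q)$ to $(-p-q,p)$ and $x$ to $\tfrac{x-1}{2}$, solving the two forward identities for $J_{n}$ and $K_{n}$ yields the stated inverse forms, in exact parallel with the classical equivalence (\ref{7.}).
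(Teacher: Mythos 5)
The paper states this lemma without proof, so there is no author's argument to compare against; your direct series verification is the natural way to fill that gap, and your handling of the two \emph{forward} identities is correct and complete. The substitution $(p,q,x)\mapsto(q,-p-q,2x+1)$ in the series defining $J_{n}$ reproduces (\ref{9}) exactly, and for $K_{n}$ the single nontrivial point is the Pochhammer reflection
\begin{equation*}
(-1)^{r}\,\frac{(1-p-q)_{n}}{(1-p-q)_{n-r}}=(p+q-n)_{r},
\end{equation*}
which you isolate and justify correctly.

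The flaw is in your last paragraph. You correctly compute that the inverse substitution sends $(p,q)$ to $(-p-q,p)$ and $x$ to $\frac{x-1}{2}$, but you then assert that this ``yields the stated inverse forms.'' It does not: solving your forward identity for $J_{n}$ gives
\begin{equation*}
J_{n}\left( p,q,\upsilon ;x\right) =\frac{\left( -1\right) ^{n}}{n!}
M_{n}\left( -p-q,\,p,\upsilon ;\frac{x-1}{2}\right) ,
\end{equation*}
in exact parallel with (\ref{7.}), and similarly $K_{n}\left( p,q,\upsilon
;x\right) =\frac{\left( -1\right) ^{n}}{n!}\mathfrak{M}_{n}\left(
-p-q,p,\upsilon ;\frac{x-1}{2}\right) $, whereas the lemma as printed has $
M_{n}\left( q,-p-q,\upsilon ;\frac{x-1}{2}\right) $ and $\mathfrak{M}
_{n}\left( q,-p-q,\upsilon ;\frac{x-1}{2}\right) $. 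The printed inverse
forms are in fact false: comparing the $j=0$ terms of the two sides of the
first one (as polynomials in $\left( \frac{1-x}{2}\right) ^{\upsilon }$)
would force $\left( 1+p\right) _{\upsilon n}=\left( 1-p-q\right) _{\upsilon
n}$ identically, which fails; they are evidently a typo for the $(-p-q,p)$
versions, since they also break the pattern of (\ref{7.}). So your method is
sound and proves the corrected statement, but the sentence claiming
agreement with the stated forms is exactly where your proof, read literally,
goes wrong: you should flag the discrepancy and state the inverse relations
with parameters $(-p-q,p)$ rather than $(q,-p-q)$.
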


\section{Generating functions and recurrence relations}

\begin{theorem}
Polynomials $M_{n}\left( p,q,\upsilon ;x\right) $ have the following
generating functions%
\begin{eqnarray}
&&\sum\limits_{n=0}^{\infty }\frac{\left( 1-q\right) _{n}}{\left(
1-p-q\right) _{\upsilon n}}M_{n}\left( p,q,\upsilon ;x\right) \frac{t^{n}}{n!%
}  \label{13} \\
&=&\left( 1+t\right) ^{q-1}\ _{\upsilon +1}F_{\upsilon }\left[ \QATOP{\Delta
\left( \upsilon +1,1-q\right) ;}{\Delta \left( \upsilon ,1-p-q\right) ;}%
\left( -\frac{\left( \upsilon +1\right) x}{\upsilon }\right) ^{\upsilon }%
\frac{\left( \upsilon +1\right) t}{\left( 1+t\right) ^{\upsilon +1}}\right] 
\notag
\end{eqnarray}%
and%
\begin{equation}
\sum\limits_{n=0}^{\infty }\frac{1}{\left( 1-p-q\right) _{\upsilon n}}%
M_{n}\left( p-n,q+n,\upsilon ;x\right) \frac{t^{n}}{n!}=e^{-t}\ _{\upsilon
}F_{\upsilon }\left[ \QATOP{\Delta \left( \upsilon ,1-q\right) ;}{\Delta
\left( \upsilon ,1-p-q\right) ;}\left( -x\right) ^{\upsilon }t\right] .
\label{14}
\end{equation}
\end{theorem}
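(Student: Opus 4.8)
The plan is to prove each generating function by inserting the explicit series (9) for $M_{n}$, interchanging the order of summation, and then reassembling the resulting multiple series into the closed forms on the right. The single recurring tool will be the Gauss--Legendre multiplication formula $(\gamma)_{\upsilon k}=\upsilon^{\upsilon k}\prod_{i=0}^{\upsilon-1}\bigl(\tfrac{\gamma+i}{\upsilon}\bigr)_{k}$, which lets me pass freely between ordinary products of shifted Pochhammer symbols and the compact $\Delta(\upsilon,\gamma)$ notation that packages the parameters of the ${}_{\upsilon+1}F_{\upsilon}$ in (13) and the ${}_{\upsilon}F_{\upsilon}$ in (14). A preliminary observation that should be recorded is that the substitution $p\mapsto p-n,\ q\mapsto q+n$ appearing in (14) leaves the sum $p+q$ invariant; consequently the denominator Pochhammer $(1-p-q)_{\upsilon n}$ and the whole parameter block $\Delta(\upsilon,1-p-q)$ are untouched by the shift and can be carried outside as constants.

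For the linear generating function (14) I would first write $M_{n}(p-n,q+n,\upsilon;x)$ from (9), producing a double sum over the outer index $n$ and the inner summation index $j$, and collect the powers $(-x)^{\upsilon j}$. Using the invariance of $p+q$, the factor $(1-p-q)_{\upsilon n}$ cancels against the normalization, and the surviving $n$-dependence should split, after reindexing, into a part depending only on $j$ (which the multiplication formula turns into the ${}_{\upsilon}F_{\upsilon}$ with argument $(-x)^{\upsilon}t$) and a residual purely numerical sum. The intended outcome is that this residual sum is exactly the Cauchy companion of the hypergeometric series, so that the two recombine as $e^{-t}\cdot {}_{\upsilon}F_{\upsilon}[\cdots]$; concretely one checks the claimed equality by comparing the coefficient of $t^{n}x^{\upsilon j}$ on both sides, which comes down to a single ratio-of-Pochhammer-symbols identity.

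For the Brafman-type identity (13) the architecture is the same but the inner summation is the real content. Here I would substitute (9), interchange so that for each fixed $j$ one sums over $n\ge j$, and read off the coefficient of $(-x)^{\upsilon j}$ as a power series in $t$ whose general term carries $(1-q)_{n}$, the ratio $(q+1)_{\upsilon n}/(1-p-q)_{\upsilon n}$, and $(n+1-p)_{\upsilon j}$. The aim is to recognise this inner $t$-series as a generalized binomial expansion: summing it should simultaneously generate the prefactor $(1+t)^{q-1}$ and the fractional-linear argument $\tfrac{(\upsilon+1)t}{(1+t)^{\upsilon+1}}$, via $(1+t)^{a}=\sum_{m}\binom{a}{m}t^{m}$. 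A final application of the multiplication formula then repackages the remaining parameters into $\Delta(\upsilon+1,1-q)$ over $\Delta(\upsilon,1-p-q)$, yielding the stated ${}_{\upsilon+1}F_{\upsilon}$.

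The hard part will be that inner summation in (13): the index $n$ is entangled with the fixed outer index $j$ through the factor $(n+1-p)_{\upsilon j}$, so the sum is not a bare binomial series. My plan is to disentangle it first, rewriting $(n+1-p)_{\upsilon j}$ together with the ratio $(q+1)_{\upsilon n}/(1-p-q)_{\upsilon n}$ in terms of a single shifted parameter by contiguous Pochhammer relations, so that the dependence on $n$ becomes a clean $\binom{\,\cdot\,}{m}$-type coefficient amenable to the generalized binomial theorem. I expect the bookkeeping of signs, of the $\upsilon^{\upsilon k}$ factors from the two multiplication-formula steps, and of the exponent $q-1-(\upsilon+1)k$ in $(1+t)^{q-1-(\upsilon+1)k}$ to be the most error-prone stage; once the inner series collapses to a hypergeometric coefficient, matching it against the right-hand side is routine. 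As a cross-check — and a possible alternative route worth keeping in reserve — the connection formulas (JM) expressing $M_{n}$ through the Jacobi-suggested polynomials $J_{n}$ allow both identities to be read off from the corresponding generating functions for $J_{n}$, which provides an independent confirmation of the closed forms.
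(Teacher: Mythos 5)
Your strategy---insert the explicit series (\ref{9}), interchange the order of summation, and repackage Pochhammer products with the Gauss multiplication formula---is exactly the ``usual series techniques'' that the paper's one-sentence proof appeals to, but the two load-bearing claims in your plan are false for the theorem \emph{as printed}, and they fail at precisely the step that carries the whole argument. For (\ref{14}) you assert that, because $p\mapsto p-n$, $q\mapsto q+n$ leaves $p+q$ invariant, ``the factor $(1-p-q)_{\upsilon n}$ cancels against the normalization.'' There is no such factor to cancel: by (\ref{9}) the leading coefficient of $M_{n}(p-n,q+n,\upsilon;x)$ is $(q+n+1)_{\upsilon n}$, not $(1-p-q)_{\upsilon n}$, and after the shift the inner Pochhammer symbols become $(2n+1-p)_{\upsilon j}$ and $(q+n+1)_{\upsilon j}$, both still entangled with $n$. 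Hence the double sum does not decouple into $e^{-t}$ times an $n$-independent ${}_{\upsilon}F_{\upsilon}$. Your own proposed cross-check exposes this: at order $t^{1}$ the constant-in-$x$ part of the left side of (\ref{14}) is $-(q+2)_{\upsilon}/(1-p-q)_{\upsilon}$, while the right side gives $-1$, and these differ for generic $p,q$. The same obstruction kills the plan for (\ref{13}): to generate the prefactor $(1+t)^{q-1}$ by the binomial theorem you need the leading factor $(q+1)_{\upsilon n}$ of $M_{n}$ to cancel against the normalizing denominator, but the printed denominator is $(1-p-q)_{\upsilon n}$; already at $\upsilon=1$ the coefficients of $t^{1}$ on the two sides of (\ref{13}) differ by the factor $(q+1)/(1-p-q)$.

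What your technique does prove---and what your fallback route through (\ref{JM}) and the Madhekar--Thakare generating functions for $J_{n}$ also produces---are the companion identities with the parameter roles transposed, namely
\begin{equation*}
\sum\limits_{n=0}^{\infty }\frac{\left( 1-p\right) _{n}}{\left( q+1\right)
_{\upsilon n}}M_{n}\left( p,q,\upsilon ;x\right) \frac{t^{n}}{n!}=\left(
1+t\right) ^{p-1}\ _{\upsilon +1}F_{\upsilon }\left[ \begin{matrix}\Delta
\left( \upsilon +1,1-p\right) ;\\ \Delta \left( \upsilon ,q+1\right)
;\end{matrix}\left( -\frac{\left( \upsilon +1\right) x}{\upsilon }\right)
^{\upsilon }\frac{\left( \upsilon +1\right) t}{\left( 1+t\right) ^{\upsilon
+1}}\right]
\end{equation*}
and
\begin{equation*}
\sum\limits_{n=0}^{\infty }\frac{1}{\left( q+1\right) _{\upsilon n}}
M_{n}\left( p+n,q,\upsilon ;x\right) \frac{t^{n}}{n!}=e^{-t}\ _{\upsilon
}F_{\upsilon }\left[ \begin{matrix}\Delta \left( \upsilon ,1-p\right) ;\\
\Delta \left( \upsilon ,q+1\right) ;\end{matrix}\left( -x\right) ^{\upsilon
}t\right] .
\end{equation*}
In the second of these the shift $p\mapsto p+n$ turns $(n+1-p)_{\upsilon j}$ into the $n$-free $(1-p)_{\upsilon j}$ and the factor $(q+1)_{\upsilon n}$ genuinely cancels; this is the decoupling your argument needs, and both identities then follow in a few lines exactly as you outline (one can check them directly at order $t^{1}$). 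So either the theorem's parameters must be corrected to this form, or the statement as printed cannot be proved; in either case your proposal as written cannot be completed, because it takes for granted cancellations that do not occur for the stated parameter configuration.
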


\begin{proof}
From the definition (\ref{9}) we obtain (\ref{13}) and (\ref{14}) by usual
series techniques.
\end{proof}

\begin{theorem}
The recurrence relations%
\begin{equation}
DM_{n}\left( p,q,\upsilon ;x\right) =-\upsilon n\left( -x\right) ^{\upsilon
-1}\left( n+1-p\right) _{\upsilon }\ M_{n-1}\left( 1-p-q,p-1-\upsilon
,\upsilon ;x\right) ,  \label{15}
\end{equation}%
\begin{eqnarray}
xDM_{n}\left( p,q,\upsilon ;x\right) &=&\upsilon n\ M_{n}\left(
-p-q,p,\upsilon ;x\right)  \label{16} \\
&&+\upsilon n\left( \upsilon n-\upsilon +q+1\right) _{\upsilon }\
M_{n-1}\left( 1-p-q,p-1,\upsilon ;x\right)  \notag
\end{eqnarray}%
and%
\begin{equation}
xDM_{n}\left( p,q,\upsilon ;x\right) =\left( \upsilon n+q\right) M_{n}\left(
1-p-q,p,\upsilon ;x\right) -q\ M_{n}\left( -p-q,p,\upsilon ;x\right)
\label{17}
\end{equation}%
are satisfied for the polynomials $M_{n}\left( p,q,\upsilon ;x\right) $.
\end{theorem}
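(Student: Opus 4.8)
The plan is to prove the pure-derivative relation (\ref{15}) by differentiating the explicit series (\ref{9}) term by term, and to obtain the two $xD$-relations (\ref{16}) and (\ref{17}) by transporting the Jacobi biorthogonal recurrences (\ref{Jrec1}) and (\ref{Jrec2}) through the connection (\ref{JM}). Writing $D=d/dx$, the two ingredients I would use repeatedly are the Pochhammer factorisations $(a)_{\upsilon(k+1)}=(a)_{\upsilon}(a+\upsilon)_{\upsilon k}$ and $(q+1)_{\upsilon n}=(q+1)_{\upsilon}(q+1+\upsilon)_{\upsilon (n-1)}$, together with the elementary identity $j\binom{n}{j}=n\binom{n-1}{j-1}$.

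For (\ref{15}) I would differentiate (\ref{9}) directly. Since $D(-x)^{\upsilon j}=-\upsilon j(-x)^{\upsilon j-1}$, the $j=0$ term disappears and a factor $\upsilon j$ is produced; rewriting $\upsilon j\binom{n}{j}=\upsilon n\binom{n-1}{j-1}$ and setting $k=j-1$ collapses the sum to one of length $n$ over $k=0,\dots,n-1$, with the common factor $(-x)^{\upsilon-1}$ pulled outside. The Pochhammer factorisations then extract $(n+1-p)_{\upsilon}$ from the numerator and reduce the leading constant $(q+1)_{\upsilon n}$ to $(q+1+\upsilon)_{\upsilon(n-1)}$; the surviving sum is precisely the series (\ref{9}) of an $M_{n-1}$ with shifted parameters. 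Re-expressing that polynomial through (\ref{JM}) yields the stated $M_{n-1}(1-p-q,p-1-\upsilon,\upsilon;x)$, and assembling the prefactor $-\upsilon n(-x)^{\upsilon-1}(n+1-p)_{\upsilon}$ gives (\ref{15}).

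For (\ref{16}) and (\ref{17}) I would start from the first form in (\ref{JM}), namely $M_{n}(p,q,\upsilon;x)=(-1)^{n}n!\,J_{n}(q,-p-q,\upsilon;2x+1)$. With $u=2x+1$ the chain rule gives $D=2D_{u}$ and $2x=u-1$, so that $xD\,M_{n}(p,q,\upsilon;x)=(-1)^{n}n!\,(u-1)D_{u}J_{n}(q,-p-q,\upsilon;u)\big|_{u=2x+1}$. I would now apply (\ref{Jrec1}) and (\ref{Jrec2}) with the Jacobi parameters taken to be $(q,-p-q)$; this replaces $(u-1)D_{u}J_{n}$ by linear combinations of Jacobi polynomials with shifted parameters, such as $J_{n-1}(q,1-p-q,\upsilon;u)$ from (\ref{Jrec1}) and $J_{n}(q-1,1-p-q,\upsilon;u)$ from (\ref{Jrec2}). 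Finally, reading the \emph{second} form in (\ref{JM}) in reverse turns each of these shifted Jacobi polynomials back into a member of the $M$-family, and the normalising factors $(-1)^{n}n!$ collapse — trivially for the degree-$n$ terms and through the ratio $(-1)^{n}n!/[(-1)^{n-1}(n-1)!]=-n$ for the degree-$(n-1)$ term — to reproduce the coefficients $\upsilon n$, $\upsilon n(\upsilon n-\upsilon+q+1)_{\upsilon}$, $(\upsilon n+q)$ and $-q$ appearing in (\ref{16}) and (\ref{17}).

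The analytic content of all three proofs is light; the main obstacle is bookkeeping. Specifically, one must carry the argument change $x\mapsto 2x+1$ and the parameter shifts through the Jacobi recurrences without error, and — the genuinely delicate step — re-identify each shifted Jacobi polynomial with the correct member of the $M$-family by invoking (\ref{JM}) in the appropriate direction. Keeping the signs, the factorial ratios, and especially the two parameter slots of $M$ consistent at this last stage is where the computation is most fragile; in (\ref{15}) the same fragility is concentrated in the single Pochhammer factorisation and the index shift $k=j-1$.
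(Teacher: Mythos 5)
Your strategy coincides with the paper's own (very terse) proof: differentiate (\ref{9}) termwise for (\ref{15}), and push (\ref{Jrec1})--(\ref{Jrec2}) through the connection (\ref{JM}) for (\ref{16})--(\ref{17}). But the step you yourself flag as delicate --- re-identifying each resulting polynomial as the member of the $M$-family named in the theorem --- is exactly where the argument breaks, and it cannot be carried out. Differentiating (\ref{9}) correctly gives
\begin{equation*}
DM_{n}\left( p,q,\upsilon ;x\right) =-\upsilon n\left( -x\right) ^{\upsilon
-1}\left( n+1-p\right) _{\upsilon }\,M_{n-1}\left( p-1-\upsilon ,q+\upsilon
,\upsilon ;x\right) ,
\end{equation*}
whose right-hand member is \emph{not} the stated $M_{n-1}\left(
1-p-q,p-1-\upsilon ,\upsilon ;x\right) $: for $n=2$, $\upsilon =1$ one has $
M_{1}\left( p-2,q+1,1;x\right) =-(q+2)-(4-p)x$ while $M_{1}\left(
1-p-q,p-2,1;x\right) =-(p-1)-(1+p+q)x$. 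Your proposed bridge, ``re-expressing
that polynomial through (\ref{JM}),'' is not a legitimate operation: (\ref{JM})
converts $M$'s into $J$'s and back; it cannot turn an $M$ with one parameter
pair into an $M$ with a different pair.

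The same defect, in hidden form, sinks the plan for (\ref{16})--(\ref{17}).
The two halves of (\ref{JM}) as printed are mutually inconsistent: inverting
the first half $M_{n}\left( p,q,\upsilon ;x\right) =\left( -1\right)
^{n}n!\,J_{n}\left( q,-p-q,\upsilon ;2x+1\right) $ yields $J_{n}\left(
p,q,\upsilon ;x\right) =\frac{\left( -1\right) ^{n}}{n!}M_{n}\left(
-p-q,p,\upsilon ;\frac{x-1}{2}\right) $ (consistent with (\ref{7.})), not $
\frac{\left( -1\right) ^{n}}{n!}M_{n}\left( q,-p-q,\upsilon ;\frac{x-1}{2}
\right) $. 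Your computation uses the first half to transform the left-hand
side and the second half for the back-conversion; only this inconsistent
mixture ``reproduces'' the printed coefficients. Carrying your own plan
through with the consistent inverse gives instead
\begin{equation*}
xDM_{n}\left( p,q,\upsilon ;x\right) =\upsilon n\,M_{n}\left( p,q,\upsilon
;x\right) +\upsilon n\left( \upsilon n-\upsilon +q+1\right) _{\upsilon
}M_{n-1}\left( p-1,q,\upsilon ;x\right)
\end{equation*}
and
\begin{equation*}
xDM_{n}\left( p,q,\upsilon ;x\right) =\left( \upsilon n+q\right) M_{n}\left(
p,q-1,\upsilon ;x\right) -q\,M_{n}\left( p,q,\upsilon ;x\right) ,
\end{equation*}
which check out directly (at $n=1$, $\upsilon =1$ both sides equal $(p-2)x$),
whereas the printed (\ref{16}) and (\ref{17}) already fail there: their
right-hand sides evaluate to $(q-p)-(2+p+q)x$ and $-(p+1)-(p+1)x$,
respectively. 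So the gap is not mere bookkeeping: no valid chain of
identities can land on the formulas as printed, because they inherit the
parameter error in the second half of (\ref{JM}). Your derivation is sound up
to the final identifications, and what it actually proves are the corrected
relations displayed above --- a defect shared by the paper's own one-line
proof of this theorem.
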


\begin{proof}
If we differentiate (\ref{9}) w.r.t. $x$, we get the relation (\ref{15}).

(\ref{16}) and (\ref{17}) are proved by using relation (\ref{JM}) and the
recurrence relation (\ref{Jrec1}) and (\ref{Jrec2}), for biorthogonal
polynomials suggested by the Jacobi polynomials, respectively.
\end{proof}

\begin{theorem}
Polynomials $M_{n}\left( p,q,\upsilon ;x\right) $ satisfy the following
differential equation%
\begin{equation}
\left[ xD\left( xD+q+1-\upsilon \right) _{\upsilon }-\left( -x\right)
^{\upsilon }\left( xD-\upsilon n\right) \left( xD+n+1-p\right) _{\upsilon }%
\right] M_{n}\left( p,q,\upsilon ;x\right) =0.  \label{Mdifequ}
\end{equation}
\end{theorem}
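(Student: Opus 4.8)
The plan is to verify (\ref{Mdifequ}) directly from the explicit expansion (\ref{9}), exploiting that every factor occurring in the bracketed operator---call it $\mathcal{L}$---is a polynomial in the Euler operator $xD$. Writing $M_{n}(p,q,\upsilon;x)=\sum_{j=0}^{n}a_{j}x^{\upsilon j}$ with
\begin{equation*}
a_{j}=(-1)^{n}(q+1)_{\upsilon n}(-1)^{j}(-1)^{\upsilon j}\binom{n}{j}\frac{(n+1-p)_{\upsilon j}}{(q+1)_{\upsilon j}},
\end{equation*}
I would use $xD\,x^{m}=m\,x^{m}$, so that each monomial is an eigenvector of $xD$, hence of any product of the form $(xD+c)_{\upsilon}$, with $(xD+c)_{\upsilon}x^{\upsilon j}=(\upsilon j+c)_{\upsilon}x^{\upsilon j}$. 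Since all the factors are functions of $xD$, they commute and the order inside each product is immaterial.

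Next I would let the two terms of $\mathcal{L}$ act separately on a single monomial $x^{\upsilon j}$. The first term is degree-preserving,
\begin{equation*}
xD(xD+q+1-\upsilon)_{\upsilon}x^{\upsilon j}=\upsilon j\,(\upsilon j+q+1-\upsilon)_{\upsilon}x^{\upsilon j},
\end{equation*}
and in particular annihilates the $j=0$ term. The second term raises the degree by $\upsilon$, because $(-x)^{\upsilon}x^{\upsilon j}=(-1)^{\upsilon}x^{\upsilon(j+1)}$, giving
\begin{equation*}
(-x)^{\upsilon}(xD-\upsilon n)(xD+n+1-p)_{\upsilon}x^{\upsilon j}=(-1)^{\upsilon}(\upsilon j-\upsilon n)(\upsilon j+n+1-p)_{\upsilon}x^{\upsilon(j+1)}.
\end{equation*}
Summing over $j$ and collecting the coefficient of a fixed power $x^{\upsilon k}$, the first term contributes through $j=k$ and the second through $j=k-1$, so that $\mathcal{L}M_{n}=0$ reduces to the single two-term identity
\begin{equation*}
a_{k}\,\upsilon k\,(\upsilon k+q+1-\upsilon)_{\upsilon}=a_{k-1}(-1)^{\upsilon}(\upsilon(k-1)-\upsilon n)(\upsilon(k-1)+n+1-p)_{\upsilon},
\end{equation*}
required for every $k$.

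The core computation is to confirm this identity from the explicit coefficient ratio
\begin{equation*}
\frac{a_{k}}{a_{k-1}}=-(-1)^{\upsilon}\,\frac{n-k+1}{k}\,\frac{(n+1-p+\upsilon(k-1))_{\upsilon}}{(q+1+\upsilon(k-1))_{\upsilon}}.
\end{equation*}
Substituting this, using the shift $(\upsilon k+q+1-\upsilon)_{\upsilon}=(q+1+\upsilon(k-1))_{\upsilon}$ to cancel the $q$-Pochhammer factor and $\upsilon(k-1)-\upsilon n=-\upsilon(n-k+1)$ to match the remaining factor, both sides collapse to $-(-1)^{\upsilon}\upsilon(n-k+1)(n+1-p+\upsilon(k-1))_{\upsilon}$, so the coefficient of $x^{\upsilon k}$ vanishes for $1\le k\le n$. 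Finally I would dispose of the two boundary powers: the factor $\upsilon k$ kills $k=0$, while at $k=n+1$ the raised term arises only from $j=n$, where $\upsilon j-\upsilon n=0$ forces it to vanish---this is precisely what confines the image to degree at most $\upsilon n$ and prevents any spill to a higher power. I expect the only friction to be bookkeeping the Pochhammer index shifts; no genuine obstacle arises. Alternatively one could read the equation off the generalized hypergeometric differential equation for ${}_{\upsilon+1}F_{\upsilon}$ after the substitution $z=(-x)^{\upsilon}$, under which $xD=\upsilon\,z\,\frac{d}{dz}$, but the direct series verification is cleaner and self-contained.
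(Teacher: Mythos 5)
Your proof is correct, and it takes a genuinely different route from the paper's. The paper proves (\ref{Mdifequ}) in one stroke: it notes that $M_{n}\left(p,q,\upsilon;x\right)$ is a $_{\upsilon+1}F_{\upsilon}$-type hypergeometric polynomial with numerator parameters $-n,\ \Delta\left(\upsilon,n+1-p\right)$, denominator parameters $\Delta\left(\upsilon,q+1\right)$ and argument $z=\left(-x\right)^{\upsilon}$, and then invokes Rainville's general operator equation $\theta\left(\theta+\beta_{1}-1\right)\cdots\left(\theta+\beta_{q}-1\right)F=z\left(\theta+\alpha_{1}\right)\cdots\left(\theta+\alpha_{p}\right)F$ with $\theta=z\frac{d}{dz}$; under the substitution one has $xD=\upsilon\,z\frac{d}{dz}$, and clearing the powers of $\upsilon$ yields exactly (\ref{Mdifequ}). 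This is precisely the ``alternative'' you mention in your last sentence, so you correctly identified both paths. What you actually carried out is the elementary, self-contained version: expanding $M_{n}$ in monomials $x^{\upsilon j}$, using that each monomial is an eigenvector of $xD$ so that $\left(xD+c\right)_{\upsilon}x^{\upsilon j}=\left(\upsilon j+c\right)_{\upsilon}x^{\upsilon j}$, and reducing the equation to the two-term coefficient recurrence
\begin{equation*}
a_{k}\,\upsilon k\left(\upsilon k+q+1-\upsilon\right)_{\upsilon}=a_{k-1}\left(-1\right)^{\upsilon}\left(\upsilon\left(k-1\right)-\upsilon n\right)\left(\upsilon\left(k-1\right)+n+1-p\right)_{\upsilon},
\end{equation*}
which you verify from the explicit ratio $a_{k}/a_{k-1}$, together with the boundary checks at $k=0$ (killed by the factor $\upsilon k$) and $k=n+1$ (killed by $\upsilon n-\upsilon n=0$). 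All of these steps check out, including the Pochhammer shift $\left(\upsilon k+q+1-\upsilon\right)_{\upsilon}=\left(q+1+\upsilon\left(k-1\right)\right)_{\upsilon}$ and the telescoping $\left(a\right)_{\upsilon k}=\left(a\right)_{\upsilon\left(k-1\right)}\left(a+\upsilon\left(k-1\right)\right)_{\upsilon}$. The trade-off: the paper's argument is shorter but leans on the hypergeometric representation and an external citation, leaving the change of variable implicit; yours is longer but needs nothing beyond the series (\ref{9}) and makes visible why the operator annihilates the polynomial degree by degree, including why nothing spills past degree $\upsilon n$. One cosmetic remark: dividing by $a_{k-1}$ presumes $a_{k-1}\neq 0$; since the required identity is polynomial in $p$ and $q$, it is cleaner to state it as the cross-multiplied identity you in fact verify, which holds identically.
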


\begin{proof}
$M_{n}\left( p,q,\upsilon ;x\right) $ are essentially $_{\upsilon
+1}F_{\upsilon }$-type generalized hypergeometric polynomials, and the
generalized hypergeometric function $_{p}F_{q}$ satisfies the equation \cite%
{R}%
\begin{equation*}
v\left( v+\beta _{1}-1\right) \left( v+\beta _{2}-1\right) ...\left( v+\beta
_{q}-1\right) F\left( x\right) =x\left( v+\alpha _{1}\right) \left( v+\alpha
_{2}\right) ...\left( v+\alpha _{p}\right) F\left( x\right) ,
\end{equation*}%
where $v=x\frac{\partial }{\partial x}$ is the differential operator. So, we
have the differential equation (\ref{Mdifequ}).
\end{proof}

\begin{theorem}
The generating function%
\begin{equation}
\sum\limits_{n=0}^{\infty }\mathfrak{M}_{n}\left( p-n,q+n,\upsilon ;x\right) 
\frac{t^{n}}{2^{n}\left( 1+x\right) ^{n}n!}=\left( 1+x\right) ^{-p}\left( 1+%
\frac{1}{2t}\right) ^{\frac{q-1}{\upsilon }}\left[ x+\left( 1+\frac{1}{2t}%
\right) ^{1/\upsilon }\right] ^{p}  \label{Mdog}
\end{equation}%
holds for $\mathfrak{M}_{n}\left( p,q,\upsilon ;x\right) $.
\end{theorem}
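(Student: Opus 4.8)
The plan is to establish (\ref{Mdog}) by a direct manipulation of the explicit representation (\ref{11}), in the same spirit as the derivations of (\ref{13}) and (\ref{14}). First I would substitute $p\mapsto p-n$ and $q\mapsto q+n$ into (\ref{11}). The useful observation is that the first Pochhammer argument is left unchanged, $(p-n)+(q+n)-n=p+q-n$, whereas the second becomes $\big(\tfrac{s+q+n+1}{\upsilon}\big)_n$. Inserting this into the left side of (\ref{Mdog}), the prefactor $\tfrac{1}{2^n(1+x)^n n!}$ combines with the factor $(1+x)^{n-r}$ coming from (\ref{11}) to leave only $(1+x)^{-r}$, so the whole left side collapses to a triple series in $n,r,s$ (with $0\le s\le r\le n$) whose $x$-dependence enters solely through powers of $x/(1+x)$.

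Next I would interchange the order of summation and try to carry out the sum over $n$ last. The decisive factor is $\big(\tfrac{s+q+n+1}{\upsilon}\big)_n$, which I would put into the Gamma-quotient form $\Gamma\!\big(\tfrac{(\upsilon+1)n+s+q+1}{\upsilon}\big)/\Gamma\!\big(\tfrac{n+s+q+1}{\upsilon}\big)$. The $\upsilon+1$ appearing against $\upsilon$ here is precisely the feature that will generate the radical $\big(1+\tfrac{1}{2t}\big)^{1/\upsilon}$ on the right side; using the multiplication formulae for the Gamma function I would convert this quotient into ordinary Pochhammer symbols, thereby recasting the remaining series into a shape that the (generalized) binomial theorem can sum.

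The closing step is the identification of the closed form: the outer summations should collapse by the binomial theorem into the two factors $(1+x)^{-p}$ and $\big[x+(1+\tfrac{1}{2t})^{1/\upsilon}\big]^{p}$ together with the scalar $\big(1+\tfrac{1}{2t}\big)^{(q-1)/\upsilon}$. Equivalently, and perhaps more transparently, I would run the computation backwards: expand the two binomial factors on the right by the generalized binomial series, expand the radical in powers of $1/(2t)$, collect the coefficient of the appropriate power of $t$, and verify that it reproduces $\tfrac{1}{2^n(1+x)^n n!}\,\mathfrak{M}_n(p-n,q+n,\upsilon;x)$ as read off from (\ref{11}).

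I expect the genuine obstacle to sit in the middle step. The Pochhammer $\big(\tfrac{s+q+n+1}{\upsilon}\big)_n$ carries the summation index $n$ simultaneously in its base and in its length, so the $n$-series is not of standard hypergeometric type until the Gamma-quotient rewriting and the multiplication formula have been applied; making the $\upsilon$ split factors recombine into the single radical, and justifying the interchange of summations (or fixing the correct formal expansion in $t$), is the delicate part, after which the two binomial collapses are routine. A convenient consistency check is the case $\upsilon=1$, where (\ref{11}) reduces to the finite orthogonal polynomials $M_n^{(p,q)}(x)$ and (\ref{Mdog}) must reproduce the corresponding generating function for $M_n^{(p-n,q+n)}(x)$.
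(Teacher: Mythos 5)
Your diagnosis of where the difficulty sits is accurate, but the obstacle is not one that Gamma-function manipulations can clear: identity (\ref{Mdog}) is not a correct statement about the polynomials defined by (\ref{11}), so your middle step cannot be completed in any form. Two concrete checks show this. First, as printed the two sides cannot agree as series: the left side is a power series in $t$, whereas the right side is a series in $1/(2t)$---analytic at $t=\infty$ with value $1$, singular as $t\to 0^{+}$---so the two sides can match only in their constant terms. Second, the natural repair, replacing $1+\frac{1}{2t}$ by $1+\frac{t}{2}$ (the version forced by consistency with the paper's own formula (\ref{18})), still fails at order $t$: from (\ref{11}) one computes $\mathfrak{M}_{1}(p,q,\upsilon;x)=\frac{1}{\upsilon}\left[(p-2)x-(q+1)\right]$, hence $\mathfrak{M}_{1}(p-1,q+1,\upsilon;x)=\frac{1}{\upsilon}\left[(p-3)x-(q+2)\right]$ and the coefficient of $t$ on the left is $\frac{(p-3)x-(q+2)}{2\upsilon(1+x)}$, while differentiating the repaired right-hand side at $t=0$ gives $\frac{(q-1)x+(p+q-1)}{2\upsilon(1+x)}$. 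These disagree identically in $x$: equality would force $p=q+2$ and $p=-2q-1$ simultaneously, impossible for $q>-1$. The $\upsilon=1$, $n=1$ instance of the consistency check you propose at the end would have caught exactly this, and your ``backwards'' verification stalls at the first nonconstant coefficient. (The paper's one-sentence proof, ``From the definition (\ref{11}), we get (\ref{Mdog})'', offers no route around any of this.)

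Independently of the falsity of the statement, your summation plan would not close even for a corrected target. Writing $\left(\frac{s+q+n+1}{\upsilon}\right)_{n}=\frac{d^{n}}{dz^{n}}\left[(1-z)^{-(s+q+1+n)/\upsilon}\right]_{z=0}$, the $s$-sum collapses to $\left(1-(1-z)^{-1/\upsilon}\right)^{r}$ and the $r$-sum can be extended to infinity under the $n$-th derivative; but the surviving $n$-dependence then sits simultaneously in $(p+q-n)_{r}$, in $(1+x)^{n-r}$, and in the exponent $-(s+q+1+n)/\upsilon$. This is precisely the situation governed by the Lagrange--B\"{u}rmann expansion (the P\'{o}lya--Szeg\"{o} theorem the paper itself invokes to prove (\ref{Mgen})), not by the multiplication formula plus the binomial theorem. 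Carrying it out, one finds
\begin{equation*}
\sum\limits_{n=0}^{\infty }\mathfrak{M}_{n}\left( p-n,q+n,\upsilon ;x\right) \frac{w^{n}}{n!}=\left( 1+x\right) ^{p+q}\frac{F\left( z\right) }{1-z\Phi ^{\prime }\left( z\right) /\Phi \left( z\right) },
\end{equation*}
where $F\left( z\right) =\left( 1-z\right) ^{(p-1)/\upsilon }\left[ x+\left( 1-z\right) ^{1/\upsilon }\right] ^{-p-q}$, $\Phi \left( z\right) =\left( 1-z\right) ^{-2/\upsilon }\left[ x+\left( 1-z\right) ^{1/\upsilon }\right] $, and $z=z(w)$ is defined \emph{implicitly} by $z/\Phi \left( z\right) =-w$; one checks this reproduces $\mathfrak{M}_{1}(p-1,q+1,\upsilon;x)$ above. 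So the true generating relation for this parameter shift is of implicit Lagrange type, not the explicit product claimed in (\ref{Mdog}); the gap in your proposal is not a missing trick in the interchange of sums, but that the theorem itself must be corrected before any proof can exist.
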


\begin{proof}
From the definition (\ref{11}), we get (\ref{Mdog}).
\end{proof}

\begin{theorem}
The following formula is satisfied for $\mathfrak{M}_{n}\left( p,q,\upsilon
;x\right) $:%
\begin{equation}
\mathfrak{M}_{n}\left( p-n,q+n,\upsilon ;x\right) =\left( -1\right)
^{n}\left( 1+x\right) ^{n-p}\left\{ \frac{\partial ^{n}}{\partial t^{n}}%
\left( 1-t\right) ^{\frac{q-1}{\upsilon }}\left[ x+\left( 1-t\right)
^{1/\upsilon }\right] ^{p}\right\} \mid _{t=0}.  \label{18}
\end{equation}
\end{theorem}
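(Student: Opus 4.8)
The plan is to read off (\ref{18}) as the Maclaurin-coefficient form of the generating function already established in (\ref{Mdog}). Indeed, the prefactor $\left(-1\right)^{n}\left(1+x\right)^{n-p}$ together with the operator $\frac{\partial^{n}}{\partial t^{n}}\mid_{t=0}$ is precisely what surfaces when one extracts the coefficient of $t^{n}$ from a power series in $t$ and then clears the accompanying normalising constants. So the first step is to put (\ref{Mdog}) into the shape of a genuine Taylor series about $t=0$ for the shifted polynomials $\mathfrak{M}_{n}\left(p-n,q+n,\upsilon;x\right)$, namely
\begin{equation*}
\sum_{n=0}^{\infty}\frac{\left(-1\right)^{n}}{n!}\left(1+x\right)^{p-n}\mathfrak{M}_{n}\left(p-n,q+n,\upsilon;x\right)t^{n}=\left(1-t\right)^{\frac{q-1}{\upsilon}}\left[x+\left(1-t\right)^{1/\upsilon}\right]^{p}.
\end{equation*}
This is (\ref{Mdog}) after absorbing the constants $2^{n}$ and $\left(1+x\right)^{\pm}$ into the summation variable and fixing the sign, i.e. after the elementary change of variable that renders the right-hand side analytic at $t=0$; no new identity is needed beyond (\ref{Mdog}).

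Once this form is available, (\ref{18}) follows at once: I would differentiate both sides $n$ times with respect to $t$ and set $t=0$. On the left, every term of index $\neq n$ is killed by the evaluation, while the index-$n$ term contributes $\frac{\left(-1\right)^{n}}{n!}\left(1+x\right)^{p-n}\mathfrak{M}_{n}\left(p-n,q+n,\upsilon;x\right)\cdot n!$; equating this with $\frac{\partial^{n}}{\partial t^{n}}\bigl[\left(1-t\right)^{\frac{q-1}{\upsilon}}\left(x+\left(1-t\right)^{1/\upsilon}\right)^{p}\bigr]\mid_{t=0}$ and solving for $\mathfrak{M}_{n}$ yields exactly (\ref{18}).

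If one prefers a derivation that does not lean on the transcription of (\ref{Mdog}), the same generating function can be produced directly from the series (\ref{11}). The key manipulation is to set $w=1-\left(1-t\right)^{1/\upsilon}$, so that $x+\left(1-t\right)^{1/\upsilon}=\left(1+x\right)\left(1-\frac{w}{1+x}\right)$ and $\left(1-t\right)^{\frac{q-1}{\upsilon}}=\left(1-w\right)^{q-1}$; expanding $\left(1-w\right)^{q-1}\left(1-\frac{w}{1+x}\right)^{p}$ by the generalized binomial theorem and then re-expanding $w$ in powers of $t$ through the relation $1-t=\left(1-w\right)^{\upsilon}$ should reproduce the double sum of (\ref{11}) under the shifts $p\mapsto p-n$, $q\mapsto q+n$. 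Here it is worth noting that the factor $\left(p+q-n\right)_{r}$ in (\ref{11}) is invariant under these two shifts, while $\left(\frac{s+q+1}{\upsilon}\right)_{n}$ becomes $\left(\frac{s+q+n+1}{\upsilon}\right)_{n}$, which is exactly the shape generated by the $\upsilon$-fractional inversion.

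The main obstacle lies in this second route: passing from the analytic function of $w$ to a clean power series in $t$ requires the Lagrange--B\"{u}rmann formula for the substitution $t=1-\left(1-w\right)^{\upsilon}$, and then matching the resulting coefficients term-by-term against the binomials $\binom{r}{s}$ and Pochhammer factors of (\ref{11}) is the delicate bookkeeping. The Carlitz summation identity already invoked in the proof of (\ref{Mort}) is the natural tool for collapsing the inner $s$-sum. A quick check at $n=0$, where both sides reduce to $1$, confirms the normalisation, and for $\upsilon=1$ the identity should specialise, under the reduction recorded earlier, to a Rodrigues-type representation of the finite orthogonal polynomials $M_{n}^{\left(p,q\right)}\left(x\right)$.
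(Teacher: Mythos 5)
Your overall strategy is the same as the paper's: the paper's entire proof of (\ref{18}) is the single sentence that it follows from (\ref{Mdog}), and coefficient extraction is the natural way to flesh that out. The problem is your first step. There is no ``elementary change of variable'' taking (\ref{Mdog}) as printed to your Taylor form. The right-hand side of (\ref{Mdog}) is a function of $1+\frac{1}{2t}$, which is singular at $t=0$, and the only substitution producing the factor $1-s$ is $s=-\frac{1}{2t}$; under it $\frac{t^{n}}{2^{n}\left(1+x\right)^{n}}$ becomes $\frac{\left(-1\right)^{n}}{4^{n}\left(1+x\right)^{n}}s^{-n}$, so (\ref{Mdog}) turns into a series in \emph{inverse} powers of $s$ carrying a spurious $4^{-n}$, not the Maclaurin series you wrote. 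Your displayed ``Taylor series form'' is therefore not a transcription of (\ref{Mdog}) at all; by Taylor's theorem it is literally equivalent to (\ref{18}), so your main route assumes the statement being proved. The only non-circular path you offer --- deriving the generating function directly from the definition (\ref{11}) via $w=1-\left(1-t\right)^{1/\upsilon}$ and Lagrange--B\"{u}rmann --- is exactly where the mathematical content lies, and you explicitly leave it unexecuted.

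Moreover, that second route cannot succeed as stated, because (\ref{18}) is inconsistent with (\ref{11}), and your $n=0$ check is blind to the parameter shifts and so cannot detect this. Take $n=1$: from (\ref{11}), $\mathfrak{M}_{1}\left(p,q,\upsilon;x\right)=\upsilon^{-1}\left[\left(p-2\right)x-\left(q+1\right)\right]$ (which correctly reduces to $M_{1}^{\left(p,q\right)}\left(x\right)$ at $\upsilon=1$), hence $\mathfrak{M}_{1}\left(p-1,q+1,\upsilon;x\right)=\upsilon^{-1}\left[\left(p-3\right)x-\left(q+2\right)\right]$; on the other hand the right-hand side of (\ref{18}) at $n=1$ evaluates to $\left(-1\right)\left(1+x\right)^{1-p}\left[-\frac{q-1}{\upsilon}\left(1+x\right)^{p}-\frac{p}{\upsilon}\left(1+x\right)^{p-1}\right]=\upsilon^{-1}\left[\left(q-1\right)\left(1+x\right)+p\right]$. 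For $p=5$, $q=0$, $\upsilon=1$ these are $2x-2$ and $4-x$, respectively. So no correct derivation of (\ref{18}) from (\ref{11}) exists; the identity (and (\ref{Mdog}), which fails the same test) needs its parameters repaired before any proof can go through. The paper's own one-line proof suffers from the same defect, but a blind attempt should have caught it with the $n=1$ sanity check you stopped short of performing.
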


\begin{proof}
(\ref{18}) follows from (\ref{Mdog}).
\end{proof}

\begin{theorem}
Polynomials $\mathfrak{M}_{n}\left( p,q,\upsilon ;x\right) $ have the
following generating function:%
\begin{eqnarray}
\sum\limits_{n=0}^{\infty }\mathfrak{M}_{n}\left( p+\zeta n,q+\theta
n,\upsilon ;x\right) \frac{u^{n}}{n!}=\upsilon \left( 1+t\right)
^{-p-q}\left( 1+\frac{xt}{1+x}\right) ^{p}\ \ \ \ \ \ \ \ \ \ \ \ \ \ \ \ \
\ \ \ \ \ \ \ \ &&  \label{Mgen} \\
\times \left\{ \frac{\upsilon +\left( \zeta +\theta \right) \left[ \left(
1+t\right) ^{\upsilon }-1\right] }{1+t}-\frac{x\left( 1+\zeta \right) \left[
\left( 1+t\right) ^{\upsilon }-1\right] }{1+x+xt}\right\} ^{-1}, &&  \notag
\end{eqnarray}%
where $u=-t\left( 1+x\right) ^{\zeta }\left( 1-t\right) ^{\frac{1-\theta }{%
\upsilon }}\left[ x+\left( 1-t\right) ^{1/\upsilon }\right] ^{-1-\zeta }$.
\end{theorem}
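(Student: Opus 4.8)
The plan is to deduce (\ref{Mgen}) from the Taylor-coefficient formula (\ref{18}) by recognising the left-hand series as a Lagrange--Bürmann (Lagrange inversion) expansion. First I would extract from (\ref{18}) the effect of the general linear shift of parameters. Replacing $p$ by $a+n$ and $q$ by $b-n$ in (\ref{18}) (so that the arguments of $\mathfrak{M}_n$ on the left become $a$ and $b$), and then setting $a=p+\zeta n$, $b=q+\theta n$, the $n$-dependence of the exponents $(b-n-1)/\upsilon$ and $a+n$ separates multiplicatively, giving
\begin{equation*}
\mathfrak{M}_n\!\left(p+\zeta n,q+\theta n,\upsilon;x\right)=(-1)^n(1+x)^{-p-\zeta n}\,n!\,[\,t^n\,]\bigl\{H(t)\,\psi(t)^n\bigr\},
\end{equation*}
where $[\,t^n\,]$ is the coefficient functional and
\begin{equation*}
H(t)=(1-t)^{\frac{q-1}{\upsilon}}\bigl[x+(1-t)^{1/\upsilon}\bigr]^{p},\qquad \psi(t)=(1-t)^{\frac{\theta-1}{\upsilon}}\bigl[x+(1-t)^{1/\upsilon}\bigr]^{\zeta+1}.
\end{equation*}
Here the point is that the linear parameter shift is exactly what turns the factor $[x+(1-t)^{1/\upsilon}]$ and the power $(1-t)$ into the $n$-th power $\psi(t)^n$.

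Substituting this into $\sum_{n\ge0}\mathfrak{M}_n(\cdots)\,u^n/n!$ and pulling $(1+x)^{-p}$ out front, the series takes the form $(1+x)^{-p}\sum_{n\ge0}U^n[\,t^n\,]\{H(t)\psi(t)^n\}$ with $U:=-u/(1+x)^\zeta$. This is precisely the setting of the Lagrange--Bürmann theorem: if $w=w(U)$ denotes the solution of $w=U\psi(w)$ with $w(0)=0$, then $\sum_{n\ge0}U^n[\,t^n\,]\{H(t)\psi(t)^n\}=H(w)/\bigl(1-U\psi'(w)\bigr)$. Hence
\begin{equation*}
\sum_{n=0}^{\infty}\mathfrak{M}_n\!\left(p+\zeta n,q+\theta n,\upsilon;x\right)\frac{u^n}{n!}=(1+x)^{-p}\,\frac{H(w)}{1-U\psi'(w)}.
\end{equation*}
Rewriting the defining equation as $u=-(1+x)^\zeta\,w/\psi(w)$ reproduces the implicit relation for the auxiliary variable recorded in the statement, so that $w$ is the inversion variable governing $u$.

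It then remains to make $H(w)$ and the Jacobian factor explicit, and this is where the bulk of the work lies. For the Jacobian I would write $U\psi'(w)=w\,\psi'(w)/\psi(w)=w\,\frac{d}{dw}\log\psi(w)$, so that logarithmic differentiation of $\psi$ splits $1-U\psi'(w)$ into a contribution from $(1-w)^{(\theta-1)/\upsilon}$ and one from $[x+(1-w)^{1/\upsilon}]^{\zeta+1}$. To reach the displayed form I would then change variables so that $(1-w)^{1/\upsilon}$ is replaced by $(1+t)^{-1}$; under this substitution $H(w)$ becomes $(1+t)^{1-p-q}(1+x+xt)^p$, while the Jacobian becomes $\frac{1+t}{\upsilon}$ times the bracket of (\ref{Mgen}). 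Collecting the powers of $(1+t)$ together with the factor $(1+x)^{-p}$ then yields the right-hand side, and the constant $\upsilon$ emerges from the Jacobian. I expect the main obstacle to be precisely this concluding simplification: performing the change of variable consistently in both $H(w)$ and $1-U\psi'(w)$, and verifying, after clearing the common denominator $1+x+xt$, that the numerator of the two rational terms collapses to $\zeta+1$ — which is exactly what converts the logarithmic-derivative expression into the bracket displayed in (\ref{Mgen}).
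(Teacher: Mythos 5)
Your mechanism is the right one --- it is the same P\'{o}lya--Szeg\"{o} Lagrange--B\"{u}rmann formula \cite[p. 146]{PS} that the paper itself invokes, applied by you directly to $\mathfrak{M}_{n}$ through (\ref{18}) rather than to $K_{n}$ through (\ref{KM}) --- and the step you expected to be the main obstacle is in fact fine: with your $H$ and $\psi$, and the substitution $(1-w)^{1/\upsilon }=(1+t)^{-1}$, one really does get $(1+x)^{-p}H(w)=(1+t)^{1-p-q}\bigl(1+\tfrac{xt}{1+x}\bigr)^{p}$ and $1-U\psi ^{\prime }(w)=\tfrac{1+t}{\upsilon }\{\cdots \}$ with exactly the curly bracket of (\ref{Mgen}). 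The genuine gap is that you apply this change of variable to $H$ and to the Jacobian but \emph{not} to $u$. What your argument proves is an identity in which $u=-(1+x)^{\zeta }w/\psi (w)$ is parametrized by the inversion variable $w$, while the right-hand side is written in a different variable $t$, the two being linked by $w=1-(1+t)^{-\upsilon }=\upsilon t+O(t^{2})$. The theorem, as stated, uses a single letter $t$ in both places, and your proof does not deliver that: rewriting your $u$ in terms of $t$ gives
\begin{equation*}
u=-(1+x)^{\zeta }\bigl[(1+t)^{\upsilon }-1\bigr](1+t)^{\zeta +\theta
-\upsilon }\left( 1+x+xt\right) ^{-1-\zeta },
\end{equation*}
which is not the stated $u=-t(1+x)^{\zeta }(1-t)^{\frac{1-\theta }{\upsilon }}[x+(1-t)^{1/\upsilon }]^{-1-\zeta }$; already at first order they differ by the factor $\upsilon $. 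So your remark that the inversion relation ``reproduces the implicit relation recorded in the statement'' holds only if the $t$ in the statement's $u$-formula is secretly a different variable from the $t$ on its right-hand side, which is not a tenable reading.

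This gap also cannot be closed, because the single-variable identity you were asked to prove is false, and so is your input (\ref{18}). Take $\upsilon =1$, $\zeta =\theta =0$, so that $u=-t(1-t)/(1+x-t)$ and, from (\ref{11}), $\mathfrak{M}_{1}(p,q,1;x)=(p-2)x-(q+1)$; comparing coefficients of $t$ on the two sides of (\ref{Mgen}) gives
\begin{equation*}
\frac{(q+1)+(2-p)x}{1+x}\qquad \text{versus}\qquad \frac{(1-p-q)+(2-q)x}{1+x},
\end{equation*}
which agree only when $p=q=0$. Note the right expression is the left one after the substitution $(p,q)\mapsto (q,-p-q)$ of (\ref{KM}): the paper transported the Madhekar--Thakare generating function but applied the parameter dictionary on one side only, and the same transcription error sits in (\ref{Mdog}) and (\ref{18}). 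Indeed, for $n=1$ formula (\ref{11}) gives $\mathfrak{M}_{1}(p-1,q+1,\upsilon ;x)=\upsilon ^{-1}[(p-3)x-(q+2)]$, whereas the right-hand side of (\ref{18}) evaluates to $\upsilon ^{-1}[(q-1)x+(p+q-1)]$, so you are proving statements about polynomials that are not the ones defined by (\ref{11}). A correct treatment must re-derive the coefficient formula from (\ref{11}); one finds
\begin{equation*}
\mathfrak{M}_{n}(p+n,q,\upsilon ;x)=(-1)^{n}(1+x)^{p+q+n}\Bigl\{\partial
_{w}^{n}\,(1-w)^{\frac{p-1}{\upsilon }}\bigl[x+(1-w)^{1/\upsilon }\bigr]
^{-p-q}\Bigr\}\Big|_{w=0},
\end{equation*}
and running your Lagrange--B\"{u}rmann argument on this (the mechanics of which you clearly command) produces a corrected version of (\ref{Mgen}) in which the parameters and the single auxiliary variable are mutually consistent.
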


\begin{proof}
By considering the generating function \cite[p. 146]{PS} and the relation (%
\ref{KM}), we arrive the generating function (\ref{Mgen}).
\end{proof}

\section{Fourier transform of the finite univariate biorthogonal polynomials
suggested by the finite orthogonal polynomials $M_{n}^{\left( p,q\right)
}\left( x\right) $}

The Fourier transform for a function $s(x)$ in one variable is defined as 
\cite[p. 111, Equ. (7.1)]{Davies}%
\begin{equation*}
\tciFourier \left( s\left( x\right) \right) =\int\limits_{-\infty }^{\infty
}e^{-i\varpi x}s\left( x\right) dx
\end{equation*}%
and the corresponding Parseval identity is given by the statement%
\begin{equation}
\int\limits_{-\infty }^{\infty }s\left( x\right) \overline{r\left( x\right) }%
dx=\frac{1}{2\pi }\int\limits_{-\infty }^{\infty }\tciFourier \left( s\left(
x\right) \right) \overline{\tciFourier \left( r\left( x\right) \right) }%
d\varpi  \label{Parseval}
\end{equation}%
for $s,r\in L^{2}\left( 
\mathbb{R}
\right) $.

\bigskip

If we define the specific functions%
\begin{equation*}
\QATOPD\{ . {s\left( x\right) =e^{\gamma _{1}x}\left( 1+e^{x}\right)
^{-\gamma _{1}-\gamma _{2}}M_{n}\left( p,q,\upsilon ;e^{x}\right) ,}{r\left(
x\right) =e^{\lambda _{2}x}\left( 1+e^{x}\right) ^{-\lambda _{1}-\lambda
_{2}}\mathfrak{M}_{m}\left( a,b,\upsilon ;e^{x}\right) ,}
\end{equation*}%
then%
\begin{eqnarray*}
&&\tciFourier \left( s\left( x\right) \right) =\int\limits_{-\infty
}^{\infty }e^{-i\varpi x}s\left( x\right) dx=\int\limits_{-\infty }^{\infty
}e^{\left( \gamma _{1}-i\varpi \right) x}\left( 1+e^{x}\right) ^{-\left(
\gamma _{1}+\gamma _{2}\right) }M_{n}\left( p,q,\upsilon ;e^{x}\right) dx \\
&=&\int\limits_{-\infty }^{\infty }u^{\gamma _{1}-i\varpi }\left( 1+u\right)
^{-\left( \gamma _{1}+\gamma _{2}\right) }M_{n}\left( p,q,\upsilon
;e^{x}\right) dx \\
&=&\left( -1\right) ^{n}\left( q+1\right) _{\upsilon n}\sum_{j=0}^{n}\frac{%
\left( -1\right) ^{\upsilon j}\left( -n\right) _{j}\left( n+1-p\right)
_{\upsilon j}}{j!\left( q+1\right) _{\upsilon j}}\int\limits_{0}^{\infty
}u^{\gamma _{1}-i\varpi -1+\upsilon j}\left( 1+u\right) ^{-\left( \gamma
_{1}+\gamma _{2}\right) }du \\
&=&\left( -1\right) ^{n}B\left( \gamma _{1}-i\varpi ,\gamma _{2}+i\varpi
\right) \left( q+1\right) _{\upsilon n}\sum_{j=0}^{n}\frac{\left( -n\right)
_{j}\left( n+1-p\right) _{\upsilon j}\left( \gamma _{1}-i\varpi \right)
_{\upsilon j}}{j!\left( q+1\right) _{\upsilon j}\left( 1-\gamma _{2}-i\varpi
\right) _{\upsilon j}} \\
&=&B\left( \gamma _{1}-i\varpi ,\gamma _{2}+i\varpi \right) \left( -1\right)
^{n}\left( q+1\right) _{\upsilon n} \\
&&\times \ _{\upsilon +1}F_{2\upsilon }\left[ -n,\Delta \left( \upsilon
,n+1-p\right) ;\Delta \left( \upsilon ,q+1\right) ,\Delta \left( \upsilon
,1-\gamma _{2}-i\varpi \right) ;1\right] ,
\end{eqnarray*}%
where%
\begin{equation*}
B(p,q)=\int\limits_{0}^{\infty }x^{p-1}\left( 1-x\right) ^{q-1}dx=\frac{%
\Gamma \left( p\right) \Gamma \left( q\right) }{\Gamma \left( p+q\right) }%
=B\left( q,p\right)
\end{equation*}%
and%
\begin{equation*}
\Gamma \left( z\right) =\int\limits_{0}^{\infty }t^{z-1}e^{-t}dt\ \ ,\ \ 
\func{Re}(z)>0
\end{equation*}%
are the Beta and the Gamma integrals, respectively.

Similarly,%
\begin{eqnarray*}
\tciFourier \left( r\left( x\right) \right) &=&\frac{\Gamma \left( \lambda
_{1}+i\varpi \right) \Gamma \left( \lambda _{2}-i\varpi \right) }{\Gamma
\left( \lambda _{1}+\lambda _{2}-m\right) \left( 1-\lambda _{1}-i\varpi
\right) _{m}} \\
&&\times \sum_{r=0}^{m}\frac{\left( a+b-m\right) _{r}\left( \lambda
_{2}-i\varpi \right) _{r}}{\left( \lambda _{1}+\lambda _{2}-m\right) _{r}r!}%
\sum_{s=0}^{r}\frac{\left( -r\right) _{s}}{s!}\left( \frac{s+b+1}{\upsilon }%
\right) _{m}.
\end{eqnarray*}

Thus, from Parseval identity (\ref{Parseval}), we write%
\begin{eqnarray}
&&\int\limits_{0}^{\infty }u^{\gamma _{1}+\lambda _{2}-1}\left( 1+u\right)
^{-\left( \gamma _{1}+\gamma _{2}+\lambda _{1}+\lambda _{2}\right)
}M_{n}\left( p,q,\upsilon ;u\right) \mathfrak{M}_{m}\left( a,b,\upsilon
;u\right) du  \label{P1} \\
&=&\frac{\left( -1\right) ^{n}\left( q+1\right) _{\upsilon n}}{2\pi i\Gamma
\left( \gamma _{1}+\gamma _{2}\right) \Gamma \left( \lambda _{1}+\lambda
_{2}-m\right) }\int\limits_{-\infty }^{\infty }\Gamma \left( \gamma
_{1}-i\varpi \right) \Gamma \left( \gamma _{2}+i\varpi \right) \Gamma \left(
\lambda _{1}-i\varpi \right) \Gamma \left( \lambda _{2}+i\varpi \right) 
\notag \\
&&\times \ _{\upsilon +1}F_{2\upsilon }\left[ -n,\Delta \left( \upsilon
,n+1-p\right) ;\Delta \left( \upsilon ,q+1\right) ,\Delta \left( \upsilon
,1-\gamma _{2}-i\varpi \right) ;1\right]  \notag \\
&&\times \frac{1}{\left( 1-\lambda _{1}+i\varpi \right) _{m}}\sum_{r=0}^{m}%
\frac{\left( a+b-m\right) _{r}\left( \lambda _{2}+i\varpi \right) _{r}}{%
\left( \lambda _{1}+\lambda _{2}-m\right) _{r}r!}\sum_{s=0}^{r}\frac{\left(
-r\right) _{s}}{s!}\left( \frac{s+b+1}{\upsilon }\right) _{m}.  \notag
\end{eqnarray}%
Choosing $\gamma _{1}+\gamma _{2}-1=q=b$ and $\lambda _{1}+\lambda
_{2}+1=p=a $ in relation (\ref{P1}), we obtain the biorthogonality relation (%
\ref{Mort}) in the left-hand side of the Parseval identity. So, we have%
\begin{eqnarray*}
&&\frac{n!\Gamma \left( \gamma _{2}+\lambda _{1}+1-n\right) \Gamma \left(
\gamma _{1}+\lambda _{2}+\upsilon n\right) }{\left( \gamma _{2}+\lambda
_{1}-n-\upsilon n\right) \Gamma \left( \gamma _{1}+\gamma _{2}+\lambda
_{1}+\lambda _{2}-n\right) }\delta _{n,m} \\
&=&\frac{\left( \gamma _{1}+\lambda _{2}\right) _{\upsilon n}\left(
1-\lambda _{1}-\lambda _{2}\right) _{n}}{2\pi i\Gamma \left( \gamma
_{1}+\gamma _{2}\right) \Gamma \left( \lambda _{1}+\lambda _{2}\right) }%
\int\limits_{-\infty }^{\infty }\Gamma \left( \gamma _{1}-i\varpi \right)
\Gamma \left( \gamma _{2}+i\varpi \right) \Gamma \left( \lambda _{1}-i\varpi
\right) \Gamma \left( \lambda _{2}+i\varpi \right) \\
&&\times \Phi \left( \gamma _{1},\gamma _{2},\lambda _{1},\lambda
_{2},n,\upsilon ;i\varpi \right) \chi \left( \gamma _{1},\gamma _{2},\lambda
_{1},\lambda _{2},m,\upsilon ;-i\varpi \right) d\varpi ,
\end{eqnarray*}%
where%
\begin{eqnarray*}
&&\Phi \left( \gamma _{1},\gamma _{2},\lambda _{1},\lambda _{2},n,\upsilon
;\varpi \right) \\
&=&\ _{\upsilon +1}F_{2\upsilon }\left[ -n,\Delta \left( \upsilon ,n-\gamma
_{2}-\lambda _{1}\right) ;\Delta \left( \upsilon ,\gamma _{1}+\lambda
_{2}\right) ,\Delta \left( \upsilon ,1-\gamma _{2}-\varpi \right) ;1\right]
\end{eqnarray*}%
and%
\begin{eqnarray*}
&&\chi \left( \gamma _{1},\gamma _{2},\lambda _{1},\lambda _{2},n,\upsilon
;\varpi \right) \\
&=&\frac{1}{\left( 1-\lambda _{1}-\varpi \right) _{n}}\sum_{r=0}^{n}\frac{%
\left( \gamma _{1}+\gamma _{2}+\lambda _{1}+\lambda _{2}-n\right) _{r}\left(
\lambda _{2}-\varpi \right) _{r}}{\left( \lambda _{1}+\lambda _{2}-n\right)
_{r}r!}\sum_{s=0}^{r}\frac{\left( -r\right) _{s}}{s!}\left( \frac{s+\gamma
_{1}+\lambda _{2}}{\upsilon }\right) _{n}.
\end{eqnarray*}%
Consequently, we can give the following theorem:

\begin{theorem}
The pair of functions%
\begin{eqnarray*}
&&\Phi \left( \gamma _{1},\gamma _{2},\lambda _{1},\lambda _{2},n,\upsilon
;x\right) \\
&=&\ _{\upsilon +1}F_{2\upsilon }\left[ -n,\Delta \left( \upsilon ,n-\gamma
_{2}-\lambda _{1}\right) ;\Delta \left( \upsilon ,\gamma _{1}+\lambda
_{2}\right) ,\Delta \left( \upsilon ,1-\gamma _{2}-x\right) ;1\right]
\end{eqnarray*}%
and%
\begin{eqnarray*}
&&\chi \left( \gamma _{1},\gamma _{2},\lambda _{1},\lambda _{2},n,\upsilon
;x\right) \\
&=&\frac{1}{\left( 1-\lambda _{1}-x\right) _{n}}\sum_{r=0}^{n}\frac{\left(
\gamma _{1}+\gamma _{2}+\lambda _{1}+\lambda _{2}-n\right) _{r}\left(
\lambda _{2}-x\right) _{r}}{\left( \lambda _{1}+\lambda _{2}-n\right) _{r}r!}%
\sum_{s=0}^{r}\frac{\left( -r\right) _{s}}{s!}\left( \frac{s+\gamma
_{1}+\lambda _{2}}{\upsilon }\right) _{n}.
\end{eqnarray*}%
are finite biorthogonal w.r.t. the weight function $w\left( x\right) =\Gamma
\left( \gamma _{1}-x\right) \Gamma \left( \gamma _{2}+x\right) \Gamma \left(
\lambda _{1}-x\right) \Gamma \left( \lambda _{2}+x\right) $ on $\left(
-\infty ,\infty \right) $, and they have the following finite
biorthogonality relation%
\begin{eqnarray*}
&&\int\limits_{-\infty }^{\infty }\Gamma \left( \gamma _{1}-ix\right) \Gamma
\left( \gamma _{2}+ix\right) \Gamma \left( \lambda _{1}-ix\right) \Gamma
\left( \lambda _{2}+ix\right) \\
&&\times \Phi \left( \gamma _{1},\gamma _{2},\lambda _{1},\lambda
_{2},n,\upsilon ;ix\right) \chi \left( \gamma _{1},\gamma _{2},\lambda
_{1},\lambda _{2},m,\upsilon ;-ix\right) dx \\
&=&n!2\pi iB\left( \gamma _{1}+\gamma _{2},\lambda _{1}+\lambda _{2}\right) 
\frac{\Gamma \left( \gamma _{1}+\lambda _{2}\right) \Gamma \left( \gamma
_{2}+\lambda _{1}+1\right) \left( 1-\gamma _{1}-\gamma _{2}-\lambda
_{1}-\lambda _{2}\right) _{n}}{\left( \gamma _{2}+\lambda _{1}-n-\upsilon
n\right) \left( 1-\lambda _{1}-\lambda _{2}\right) _{n}\left( -\gamma
_{2}-\lambda _{1}\right) _{n}}\delta _{n,m}
\end{eqnarray*}%
for $\gamma _{1},\gamma _{2},\lambda _{1},\lambda _{2}>0$ and $\gamma
_{2}+\lambda _{1}>n+\upsilon n$.
\end{theorem}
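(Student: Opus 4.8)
The plan is to obtain the asserted relation directly from Parseval's identity (\ref{Parseval}) applied to the two functions $s(x)=e^{\gamma_{1}x}(1+e^{x})^{-\gamma_{1}-\gamma_{2}}M_{n}(p,q,\upsilon;e^{x})$ and $r(x)=e^{\lambda_{2}x}(1+e^{x})^{-\lambda_{1}-\lambda_{2}}\mathfrak{M}_{m}(a,b,\upsilon;e^{x})$, exactly as set up above. First I would record the two Fourier transforms: substituting $u=e^{x}$ converts each defining integral into a finite sum of Beta integrals $B(\cdot,\cdot)$, and collecting the emerging ratios of Pochhammer symbols realizes $\tciFourier(s)$ as a $B$-factor times the $_{\upsilon+1}F_{2\upsilon}$ that defines $\Phi$, and $\tciFourier(r)$ as a $\Gamma$-quotient times the double sum that defines $\chi$. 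Feeding these into the right-hand side of (\ref{Parseval}) produces precisely the $\varpi$-integral of $\Gamma(\gamma_{1}-i\varpi)\Gamma(\gamma_{2}+i\varpi)\Gamma(\lambda_{1}-i\varpi)\Gamma(\lambda_{2}+i\varpi)\,\Phi\,\chi$ appearing in (\ref{P1}), up to the explicit prefactor displayed there.

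The second step is to evaluate the left-hand side of (\ref{Parseval}) in closed form. After the same substitution $u=e^{x}$ it becomes $\int_{0}^{\infty}u^{\gamma_{1}+\lambda_{2}-1}(1+u)^{-(\gamma_{1}+\gamma_{2}+\lambda_{1}+\lambda_{2})}M_{n}(p,q,\upsilon;u)\mathfrak{M}_{m}(a,b,\upsilon;u)\,du$. Imposing $\gamma_{1}+\gamma_{2}-1=q=b$ and $\lambda_{1}+\lambda_{2}+1=p=a$ aligns the two polynomial parameter strings and turns the exponent of $(1+u)$ into $-(p+q)$, so this is a finite-biorthogonality integral of the very type treated in \thmref{Mort}. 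Carrying out the same computation as in that proof, but keeping the power of $u$ general, one expands by (\ref{9}) and (\ref{11}), evaluates the $u$-integrals as Beta functions, applies the Carlitz identity, and finishes with the derivative-at-one vanishing argument; this yields the Kronecker-delta mass $\frac{n!\Gamma(\gamma_{2}+\lambda_{1}+1-n)\Gamma(\gamma_{1}+\lambda_{2}+\upsilon n)}{(\gamma_{2}+\lambda_{1}-n-\upsilon n)\Gamma(\gamma_{1}+\gamma_{2}+\lambda_{1}+\lambda_{2}-n)}\delta_{n,m}$ recorded just before the theorem.

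The final step is purely algebraic: equate the two evaluations, solve for the $\varpi$-integral, and fold the prefactor $\frac{(\gamma_{1}+\lambda_{2})_{\upsilon n}(1-\lambda_{1}-\lambda_{2})_{n}}{2\pi i\,\Gamma(\gamma_{1}+\gamma_{2})\Gamma(\lambda_{1}+\lambda_{2})}$ together with the mass into the stated closed form. Here I would use $\Gamma(\gamma_{1}+\lambda_{2}+\upsilon n)=(\gamma_{1}+\lambda_{2})_{\upsilon n}\Gamma(\gamma_{1}+\lambda_{2})$ to cancel the $(\gamma_{1}+\lambda_{2})_{\upsilon n}$, introduce the Beta factor through $B(\gamma_{1}+\gamma_{2},\lambda_{1}+\lambda_{2})=\Gamma(\gamma_{1}+\gamma_{2})\Gamma(\lambda_{1}+\lambda_{2})/\Gamma(\gamma_{1}+\gamma_{2}+\lambda_{1}+\lambda_{2})$, and convert the surviving quotient $\Gamma(\gamma_{2}+\lambda_{1}+1-n)/\Gamma(\gamma_{1}+\gamma_{2}+\lambda_{1}+\lambda_{2}-n)$ into the Pochhammer ratio $(1-\gamma_{1}-\gamma_{2}-\lambda_{1}-\lambda_{2})_{n}/(-\gamma_{2}-\lambda_{1})_{n}$.

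I expect the main obstacle to be exactly this last conversion: matching $\Gamma$-arguments shifted by $n$ against Pochhammer symbols forces repeated use of the reflection formula $\Gamma(z)\Gamma(1-z)=\pi/\sin\pi z$, and one must track the factors coming from $\sin(\pi(A-n))=(-1)^{n}\sin\pi A$ so that every stray $(-1)^{n}$ cancels between numerator and denominator. Applying reflection at both $\gamma_{2}+\lambda_{1}$ and $\gamma_{1}+\gamma_{2}+\lambda_{1}+\lambda_{2}$ makes the two sides coincide, giving the biorthogonality relation; the hypotheses $\gamma_{1},\gamma_{2},\lambda_{1},\lambda_{2}>0$ and $\gamma_{2}+\lambda_{1}>n+\upsilon n$ are precisely what secure convergence of all the Beta integrals and non-vanishing of the denominator factor $(\gamma_{2}+\lambda_{1}-n-\upsilon n)$.
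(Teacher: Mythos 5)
You follow the paper's own route exactly: compute the Fourier transforms of $s$ and $r$, feed them into Parseval's identity (\ref{Parseval}) to obtain (\ref{P1}), specialize parameters so the left-hand side becomes the finite biorthogonality integral (\ref{Mort}), and rearrange. Your transform computations and your final algebraic step are fine (though the reflection formula is not needed there: $\Gamma \left( A-n\right) =\Gamma \left( A\right) /\left( A-n\right) _{n}$ together with $\left( A-n\right) _{n}=\left( -1\right) ^{n}\left( 1-A\right) _{n}$ gives the stated Pochhammer ratios at once, the two $\left( -1\right) ^{n}$'s cancelling between numerator and denominator).

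The gap is in the specialization step. The exponent of $u$ on the left of (\ref{P1}) is $\gamma _{1}+\lambda _{2}-1$, and under the constraints you impose, $\gamma _{1}+\gamma _{2}-1=q=b$ and $\lambda _{1}+\lambda _{2}+1=p=a$, this exponent is in general \emph{not} $q$. You notice this (``keeping the power of $u$ general'') but then assert that the computation proving (\ref{Mort}) still goes through and still produces a Kronecker delta. It does not: that proof works precisely because the Beta integrals generate the binomials $\binom{q+\upsilon j+r}{r}$, so that Carlitz's identity (with $\alpha =q$) collapses the inner double sum to $\left( -j\right) _{m}$, which vanishes for $j<m$ and feeds the derivative-at-one argument; with exponent $c\neq q$ Carlitz's identity yields $\left( \frac{q-c}{\upsilon }-j\right) _{m}$ instead, which does not vanish, and the integral is genuinely non-diagonal. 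Concretely, for $\upsilon =1$, $n=0$, $m=1$,
\begin{equation*}
\int\limits_{0}^{\infty }u^{c}\left( 1+u\right) ^{-\left( p+q\right) }
\mathfrak{M}_{1}\left( p,q,1;u\right) du=\frac{\Gamma \left( c+1\right)
\Gamma \left( p+q-c-2\right) }{\Gamma \left( p+q\right) }\left( c-q\right)
\left( p+q-1\right) ,
\end{equation*}
which is nonzero whenever $c\neq q$. The specialization that actually works is $q=b=\gamma _{1}+\lambda _{2}-1$ and $p=a=\gamma _{2}+\lambda _{1}+1$: then the exponent of $u$ is exactly $q$, the exponent of $\left( 1+u\right) $ is $-\left( p+q\right) $ because $\gamma _{1}+\gamma _{2}+\lambda _{1}+\lambda _{2}=\left( q+1\right) +\left( p-1\right) $, the left side of (\ref{P1}) is literally (\ref{Mort}), and the parameter strings in $\Phi $ and $\chi $ (namely $n+1-p=n-\gamma _{2}-\lambda _{1}$ and $q+1=\gamma _{1}+\lambda _{2}$), the delta mass, and the hypothesis $\gamma _{2}+\lambda _{1}>n+\upsilon n$ (i.e.\ $p>\left( \upsilon +1\right) n+1$) all come out consistently. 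Indeed, the mass and prefactor you quote already presuppose this identification — they contain $\Gamma \left( \gamma _{2}+\lambda _{1}+1-n\right) =\Gamma \left( p-n\right) $, $\Gamma \left( \gamma _{1}+\lambda _{2}+\upsilon n\right) =\Gamma \left( q+1+\upsilon n\right) $ and $\left( \gamma _{1}+\lambda _{2}\right) _{\upsilon n}=\left( q+1\right) _{\upsilon n}$ — so they contradict the constraints you imposed. (The paper's prose states the same mis-identification; that is evidently a typo there, since all of its displayed formulas use the correct one, but your proposal inherits it and turns it into an actual error by claiming the general-exponent integral is still diagonal.)
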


\section{Integral and operational representations for $M_{n}\left( p,q,%
\protect\upsilon ;x\right) $}

In this section, we give the operational and integral representations for
the finite biorthogonal polynomials suggested by the finite univariate
orthogonal polynomials $M_{n}^{\left( p,q\right) }\left( x\right) $.

We recall that%
\begin{equation*}
D_{x}^{-1}f\left( x\right) =\int\limits_{0}^{x}f\left( t\right) dt.
\end{equation*}

\begin{theorem}
The finite univariate orthogonal polynomials $M_{n}\left( p,q,\upsilon
;x\right) $ have the following operational representation 
\begin{equation*}
M_{n}\left( p,q,\upsilon ;x\right) =\frac{\left( -1\right) ^{n}\left(
q+1\right) _{\upsilon n}}{x^{q}}\ _{\upsilon +1}F_{0}\left[ \QATOP{-n,\Delta
\left( \upsilon ,n+1-p\right) ;}{-;}\left( \frac{-1}{D_{x}}\right)
^{\upsilon }\right] \left\{ \frac{x^{q}}{\Gamma \left( q+1\right) }\right\} ,
\end{equation*}%
where$\ _{p}F_{q}$ is the generalized hypergeometric functions.
\end{theorem}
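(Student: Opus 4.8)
The plan is to read off the claimed operational form directly from the explicit series~(\ref{9}), converting the Pochhammer denominator $(q+1)_{\upsilon j}$ into iterated applications of the inverse operator $D_x^{-1}$. The single analytic ingredient is the elementary identity $D_x^{-1}x^{q}=\int_{0}^{x}t^{q}\,dt=\frac{x^{q+1}}{q+1}$, whose $m$-fold iteration gives $\left(D_x^{-1}\right)^{m}x^{q}=\frac{x^{q+m}}{(q+1)_{m}}$, equivalently $\left(D_x^{-1}\right)^{m}\frac{x^{q}}{\Gamma(q+1)}=\frac{x^{q+m}}{\Gamma(q+m+1)}$. Taking $m=\upsilon j$ turns the factor $\frac{1}{(q+1)_{\upsilon j}}$ appearing in~(\ref{9}) into the action of $\left(D_x^{-1}\right)^{\upsilon j}$ on $x^{q}$, which is exactly the device needed to manufacture an operator out of the sum.

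First I would rewrite~(\ref{9}) in hypergeometric bookkeeping. Using $(-1)^{j}\binom{n}{j}=\frac{(-n)_j}{j!}$ and $(-x)^{\upsilon j}=(-1)^{\upsilon j}x^{\upsilon j}$, the definition becomes
\[
M_{n}(p,q,\upsilon;x)=(-1)^{n}(q+1)_{\upsilon n}\sum_{j=0}^{n}\frac{(-n)_j}{j!}\,(n+1-p)_{\upsilon j}\,(-1)^{\upsilon j}\,\frac{x^{\upsilon j}}{(q+1)_{\upsilon j}}.
\]
Factoring $x^{-q}$ out front and substituting $\frac{x^{q+\upsilon j}}{(q+1)_{\upsilon j}}=\left(D_x^{-1}\right)^{\upsilon j}x^{q}$ replaces each summand's $x$-dependence by a power of a single operator; collapsing the sign through $(-1)^{\upsilon j}\left(D_x^{-1}\right)^{\upsilon j}=\left(\frac{-1}{D_x}\right)^{\upsilon j}$ leaves the entire $j$-dependence expressed in integer powers of $z=\left(\frac{-1}{D_x}\right)^{\upsilon}$.

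Next I would recognise the resulting coefficient sequence as that of a $_{\upsilon+1}F_{0}$ operator series. The Gauss--Legendre multiplication theorem $\prod_{k=0}^{\upsilon-1}\left(\frac{n+1-p+k}{\upsilon}\right)_{j}=\frac{(n+1-p)_{\upsilon j}}{\upsilon^{\upsilon j}}$ lets me encode the numerator parameter $(n+1-p)_{\upsilon j}$ through $\Delta(\upsilon,n+1-p)$, so that $\frac{(-n)_j}{j!}(n+1-p)_{\upsilon j}$ becomes precisely the coefficient of $z^{j}$ in $_{\upsilon+1}F_{0}\!\left[-n,\Delta(\upsilon,n+1-p);-;z\right]$, mirroring the cancellation of $\upsilon$-powers already exploited in the $_{\upsilon+1}F_{\upsilon}$ form recorded just after~(\ref{9}). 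Because the parameter $-n$ forces $(-n)_j=0$ for $j>n$, the sum terminates at $j=n$ and the operator is a genuine polynomial of degree $\upsilon n$ in $D_x^{-1}$; no convergence question intervenes, which is the finitude feature of the family and legitimises the term-by-term action of $D_x^{-1}$ through the finite sum. Acting with this operator on $\frac{x^{q}}{\Gamma(q+1)}$, which supplies the normalizing $\Gamma(q+1)$, and restoring the prefactor $(-1)^{n}(q+1)_{\upsilon n}x^{-q}$ then assembles the asserted representation.

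The step demanding the most care is the reconciliation of the scalar constants with the operator conventions: one must check that the powers of $\upsilon$ introduced by passing to $\Delta(\upsilon,n+1-p)$ together with the factor $\Gamma(q+1)$ absorbed by choosing the operand $\frac{x^{q}}{\Gamma(q+1)}$ combine to reproduce exactly the coefficient $\frac{(n+1-p)_{\upsilon j}}{(q+1)_{\upsilon j}}$ of $(-x)^{\upsilon j}$ in~(\ref{9}). This is pure bookkeeping, but it is the point at which the normalization of the argument $z=\left(\frac{-1}{D_x}\right)^{\upsilon}$ must be pinned down so that the multiplication-theorem $\upsilon$-powers cancel as they do in the $_{\upsilon+1}F_{\upsilon}$ representation; once that is verified the identity follows.
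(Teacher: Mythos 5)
Your proposal is correct and is essentially the paper's own proof: both arguments trade the factor $\frac{x^{\upsilon j}}{\left( q+1\right) _{\upsilon j}}$ in (\ref{9}) for iterated antiderivatives $\left( D_{x}^{-1}\right) ^{\upsilon j}$ acting on the monomial, pull out the prefactor $\frac{\left( -1\right) ^{n}\left( q+1\right) _{\upsilon n}}{x^{q}}$, and package the terminating sum (finite because of $\left( -n\right) _{j}$) as the $_{\upsilon +1}F_{0}$ operator symbol with parameters $-n,\Delta \left( \upsilon ,n+1-p\right) $. The one point to flag is that the ``bookkeeping'' step you defer is genuinely loose rather than self-resolving --- a $_{\upsilon +1}F_{0}$ has no denominator parameters $\Delta \left( \upsilon ,q+1\right) $ against which the multiplication-theorem factors $\upsilon ^{-\upsilon j}$ can cancel, and acting on $\frac{x^{q}}{\Gamma \left( q+1\right) }$ rather than on $x^{q}$ leaves a stray factor $\frac{1}{\Gamma \left( q+1\right) }$ --- but these are precisely the conventions/slips already present in the paper's statement and proof (whose sum $\sum_{j}\frac{\left( -n\right) _{j}\left( n+1-p\right) _{\upsilon j}}{j!}\left( \frac{-1}{D_{x}}\right) ^{\upsilon j}$ is simply relabeled as the $_{\upsilon +1}F_{0}$ symbol), so your argument matches the paper's route and its standard of rigor.
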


\begin{proof}
Polynomials $M_{n}\left( p,q,\upsilon ;x\right) $ can be written of form%
\begin{eqnarray*}
M_{n}\left( p,q,\upsilon ;x\right) &=&\frac{\left( -1\right) ^{n}\left(
q+1\right) _{\upsilon n}}{x^{q}}\sum\limits_{j=0}^{n}\frac{\left( -n\right)
_{j}\left( n+1-p\right) _{\upsilon j}}{j!}\left( -D_{x}^{-1}\right)
^{\upsilon j}\left\{ \frac{x^{q}}{\Gamma \left( q+1\right) }\right\} \\
&=&\frac{\left( -1\right) ^{n}\left( q+1\right) _{\upsilon n}}{x^{q}}%
\sum\limits_{j=0}^{n}\frac{\left( -n\right) _{j}\left( n+1-p\right)
_{\upsilon j}}{j!}\left( \frac{-1}{D_{x}}\right) ^{\upsilon j}\left\{ \frac{%
x^{q}}{\Gamma \left( q+1\right) }\right\} \\
&=&\frac{\left( -1\right) ^{n}\left( q+1\right) _{\upsilon n}}{x^{q}}\
_{\upsilon +1}F_{0}\left[ \QATOP{-n,\Delta \left( \upsilon ,n+1-p\right) ;}{%
-;}\left( \frac{-1}{D_{x}}\right) ^{\upsilon }\right] \left\{ \frac{x^{q}}{%
\Gamma \left( q+1\right) }\right\} .
\end{eqnarray*}
\end{proof}

\bigskip

Now, let's recall the following representations of the Gamma function and
the incomplete Gamma function, respectively:%
\begin{equation}
\Gamma \left( z\right) =\int\limits_{0}^{\infty }e^{-t}t^{z-1}dt\ \ \ \ \
\left( \func{Re}\left( z\right) >0\right)  \label{Gamma}
\end{equation}%
and%
\begin{equation}
\frac{1}{\Gamma \left( z\right) }=\frac{1}{2\pi i}\int\limits_{-\infty
}^{0^{+}}e^{\theta }\theta ^{-z}d\theta \ \ \ \ \ \left( \left\vert \arg
\left( \theta \right) \right\vert \leq \pi \right) .  \label{PGamma}
\end{equation}%
The relation (\ref{PGamma}) is known as the Hankel's representation \cite{WW}%
, where the contour of integration (\ref{PGamma}) is the standard Hankel
contour.

\begin{theorem}
The finite univariate orthogonal polynomials $M_{n}\left( p,q,\upsilon
;x\right) $ have the following integral representation:%
\begin{equation*}
M_{n}\left( p,q,\upsilon ;x\right) =\frac{\Gamma \left( \upsilon
n+q+1\right) }{2\pi i\Gamma \left( n-p+1\right) }\int\limits_{0}^{\infty
}\int\limits_{-\infty }^{0^{+}}e^{-u+v}u^{n-p}v^{-\left( \upsilon
n+q+1\right) }\left( \left( -ux\right) ^{\upsilon }-v^{\upsilon }\right)
^{n}dvdu.
\end{equation*}
\end{theorem}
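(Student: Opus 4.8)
The plan is to evaluate the double integral on the right-hand side directly and recover the explicit series (\ref{9}). First I would expand the inner factor by the binomial theorem,
\begin{equation*}
\left( \left( -ux\right) ^{\upsilon }-v^{\upsilon }\right) ^{n}=\sum_{j=0}^{n}\binom{n}{j}\left( -1\right) ^{n-j}\left( -1\right) ^{\upsilon j}u^{\upsilon j}v^{\upsilon \left( n-j\right) }x^{\upsilon j},
\end{equation*}
and then interchange the finite sum with the two integrations. This separates each term into a product of a $u$-integral carrying the power $u^{n-p+\upsilon j}$ against $e^{-u}$ and a $v$-integral carrying the power $v^{-\left( \upsilon n+q+1\right) +\upsilon \left( n-j\right) }=v^{-\left( q+1+\upsilon j\right) }$ against $e^{v}$ along the Hankel contour.

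The two elementary integrals are exactly the ones recalled before the statement. Using the Gamma integral (\ref{Gamma}) gives $\int_{0}^{\infty }e^{-u}u^{n-p+\upsilon j}du=\Gamma \left( n-p+\upsilon j+1\right) $, while Hankel's representation (\ref{PGamma}) gives $\frac{1}{2\pi i}\int_{-\infty }^{0^{+}}e^{v}v^{-\left( q+1+\upsilon j\right) }dv=1/\Gamma \left( q+1+\upsilon j\right) $. The factor $2\pi i$ produced by the contour integral cancels the $1/\left( 2\pi i\right) $ in front, so after substitution the double integral collapses to the single sum
\begin{equation*}
\frac{\Gamma \left( \upsilon n+q+1\right) }{\Gamma \left( n-p+1\right) }\sum_{j=0}^{n}\binom{n}{j}\left( -1\right) ^{n-j}\left( -1\right) ^{\upsilon j}x^{\upsilon j}\frac{\Gamma \left( n-p+\upsilon j+1\right) }{\Gamma \left( q+1+\upsilon j\right) }.
\end{equation*}

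To identify this with (\ref{9}) I would rewrite the two Gamma quotients as Pochhammer symbols, namely $\Gamma \left( n-p+\upsilon j+1\right) /\Gamma \left( n-p+1\right) =\left( n+1-p\right) _{\upsilon j}$ and $\Gamma \left( \upsilon n+q+1\right) /\Gamma \left( q+1+\upsilon j\right) =\left( q+1\right) _{\upsilon n}/\left( q+1\right) _{\upsilon j}$. Pulling the prefactor $\left( q+1\right) _{\upsilon n}$ out of the sum and absorbing the sign $\left( -1\right) ^{n-j}=\left( -1\right) ^{n}\left( -1\right) ^{j}$ together with $\left( -1\right) ^{\upsilon j}x^{\upsilon j}=\left( -x\right) ^{\upsilon j}$ then reproduces precisely
\begin{equation*}
\left( -1\right) ^{n}\left( q+1\right) _{\upsilon n}\sum_{j=0}^{n}\left( -1\right) ^{j}\binom{n}{j}\frac{\left( n+1-p\right) _{\upsilon j}}{\left( q+1\right) _{\upsilon j}}\left( -x\right) ^{\upsilon j}=M_{n}\left( p,q,\upsilon ;x\right) ,
\end{equation*}
which is the claimed representation.

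The step that needs genuine care, rather than bookkeeping, is the convergence of the $u$-integral: $\int_{0}^{\infty }e^{-u}u^{n-p+\upsilon j}du$ is only absolutely convergent when $\func{Re}\left( n-p+\upsilon j+1\right) >0$, whereas the standing hypothesis $p>\left( \upsilon +1\right) N+1$ forces this exponent to be non-positive, so the integral as written fails to converge at $u=0$. I would therefore first establish the identity for values of $p$ in the strip where the integral converges (the Fubini interchange being then justified by absolute convergence of the finite sum of integrals) and afterwards extend it to the admissible range by analytic continuation in $p$: both sides are entire in $p$ once the factor $\Gamma \left( n-p+\upsilon j+1\right) /\Gamma \left( n-p+1\right) =\left( n+1-p\right) _{\upsilon j}$ is recognised as a polynomial in $p$, so agreement on an interval forces agreement everywhere. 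Alternatively, reading the Gamma integral (\ref{Gamma}) as the analytically continued Eulerian integral makes the computation above literally valid as written.
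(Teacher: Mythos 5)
Your proposal is correct and takes essentially the same approach as the paper: the paper runs the identical computation in the reverse direction, replacing $\left( n+1-p\right) _{\upsilon j}$ and $1/\left( q+1\right) _{\upsilon j}$ in the series (\ref{9}) by the Gamma integral (\ref{Gamma}) and Hankel's representation (\ref{PGamma}) and then summing the binomial series $\sum_{j}\frac{\left( -n\right) _{j}}{j!}z^{j}=\left( 1-z\right) ^{n}$ to produce the factor $\left( \left( -ux\right) ^{\upsilon }-v^{\upsilon }\right) ^{n}$, whereas you expand that binomial and integrate term by term. Your closing observation about the divergence of the $u$-integral when $p>\left( \upsilon +1\right) N+1$, and the repair via analytic continuation in $p$ (or reading (\ref{Gamma}) as the continued Eulerian integral), addresses a point the paper's proof silently glosses over and is a genuine improvement in rigor.
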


\begin{proof}
Considering (\ref{Gamma}) and (\ref{PGamma}) we have%
\begin{eqnarray*}
M_{n}\left( p,q,\upsilon ;x\right) &=&\frac{\left( -1\right) ^{n}\Gamma
\left( \upsilon n+q+1\right) }{2\pi i\Gamma \left( n-p+1\right) }%
\int\limits_{0}^{\infty }\int\limits_{-\infty
}^{0^{+}}e^{-u+v}u^{n-p}v^{-q-1}\sum\limits_{j=0}^{n}\frac{\left( -n\right)
_{j}}{j!}\left( \left( -\frac{ux}{v}\right) ^{\upsilon }\right) ^{j}dudv \\
&=&\frac{\Gamma \left( \upsilon n+q+1\right) }{2\pi i\Gamma \left(
n-p+1\right) }\int\limits_{0}^{\infty }\int\limits_{-\infty
}^{0^{+}}e^{-u+v}u^{n-p}v^{-q-1}\left( \left( -\frac{ux}{v}\right)
^{\upsilon }-1\right) ^{n}dudv \\
&=&\frac{\Gamma \left( \upsilon n+q+1\right) }{2\pi i\Gamma \left(
n-p+1\right) }\int\limits_{0}^{\infty }\int\limits_{-\infty
}^{0^{+}}e^{-u+v}u^{n-p}v^{-\left( \upsilon n+q+1\right) }\left( \left(
-ux\right) ^{\upsilon }-v^{\upsilon }\right) ^{n}dudv.
\end{eqnarray*}
\end{proof}

\section{Laplace transform and Fractional calculus operators for finite
biorthogonal polynomials $M_{n}\left( p,q,\protect\upsilon ;x\right) $}

In this section we calculate the Laplace transform of $M_{n}\left(
p,q,\upsilon ;x\right) $. Then, we compute fractional calculus operators.

The Laplace transform of the function $f$ is defined by \cite{KG}%
\begin{equation*}
\tciLaplace \lbrack f]\left( s\right) =\int\limits_{0}^{\infty
}e^{-st}f\left( t\right) dt\ \ \ \ \left( \func{Re}\left( s\right) >0\right)
.
\end{equation*}

\begin{theorem}
The Laplace transform of $M_{n}\left( p,q,\upsilon ;x\right) $ is given by%
\begin{equation*}
\tciLaplace \left\{ x^{q}M_{n}\left( p,q,\upsilon ;wx\right) \right\} =\frac{%
\left( -1\right) ^{n}\Gamma \left( \upsilon n+q+1\right) }{a^{q+1}}\
_{\upsilon +1}F_{0}\left[ \QATOP{-n,\Delta \left( \upsilon ,n+1-p\right) ;}{%
-;}\left( \frac{-w}{a}\right) ^{\upsilon }\right] ,
\end{equation*}%
for $\left\vert \left( \frac{w}{a}\right) ^{\upsilon }\right\vert <1$ and $%
q>-1$.
\end{theorem}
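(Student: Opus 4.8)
The plan is to prove the formula by termwise Laplace transformation of the explicit series representation (\ref{9}), following exactly the pattern already used for the operational representation. Throughout I write $a$ for the transform variable, so that $\tciLaplace\{g\}(a)=\int_0^\infty e^{-ax}g(x)\,dx$.

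First I would substitute $x\mapsto wx$ in (\ref{9}) and multiply by $x^{q}$, obtaining
\[
x^{q}M_{n}\left( p,q,\upsilon ;wx\right) =\left( -1\right) ^{n}\left(
q+1\right) _{\upsilon n}\sum_{j=0}^{n}\left( -1\right) ^{j}\binom{n}{j}\frac{
\left( n+1-p\right) _{\upsilon j}}{\left( q+1\right) _{\upsilon j}}\left(
-w\right) ^{\upsilon j}x^{q+\upsilon j}.
\]
Because this is a finite sum, I may interchange the summation with the Laplace integral without any analytic scruple, reducing the entire computation to the elementary transforms $\tciLaplace\{x^{q+\upsilon j}\}(a)=\Gamma\left( q+\upsilon j+1\right) /a^{q+\upsilon j+1}$. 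This is precisely the step where the hypothesis $q>-1$ enters: it guarantees $\func{Re}\left( q+\upsilon j+1\right) >0$ for every $j\geq 0$, hence the convergence of each of these Gamma integrals.

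The second step is the Pochhammer/Gamma bookkeeping. Writing $\Gamma\left( q+\upsilon j+1\right) =\left( q+1\right) _{\upsilon j}\Gamma\left( q+1\right) $ cancels the factor $\left( q+1\right) _{\upsilon j}$ in the denominator of (\ref{9}), while $\left( q+1\right) _{\upsilon n}\Gamma\left( q+1\right) =\Gamma\left( \upsilon n+q+1\right) $ produces the Gamma prefactor in the claimed formula. After splitting $a^{-(q+\upsilon j+1)}=a^{-(q+1)}a^{-\upsilon j}$ and absorbing $a^{-\upsilon j}$ into $\left( -w\right) ^{\upsilon j}$ to form $\left( -w/a\right) ^{\upsilon j}$, the transform takes the shape
\[
\frac{\left( -1\right) ^{n}\Gamma\left( \upsilon n+q+1\right) }{a^{q+1}}
\sum_{j=0}^{n}\left( -1\right) ^{j}\binom{n}{j}\left( n+1-p\right)
_{\upsilon j}\left( \frac{-w}{a}\right) ^{\upsilon j}.
\]

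Finally I would repackage this sum as a generalized hypergeometric function exactly as in the proof of the operational representation: replacing $\left( -1\right) ^{j}\binom{n}{j}$ by $\left( -n\right) _{j}/j!$ and recording $\left( n+1-p\right) _{\upsilon j}$ through the $\Delta\left( \upsilon ,n+1-p\right) $ notation identifies the sum with $_{\upsilon +1}F_{0}\left[ -n,\Delta\left( \upsilon ,n+1-p\right) ;-;\left( -w/a\right) ^{\upsilon }\right] $, which is the stated result. The condition $\left\vert \left( w/a\right) ^{\upsilon }\right\vert <1$ serves only to place the argument in the standard convergence domain of the hypergeometric symbol; since the series terminates at $j=n$, no genuine analytic difficulty arises. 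I expect no substantive obstacle here: the single point requiring attention is to keep the $\Delta$-convention consistent with the earlier operational representation, so that the factor $\left( n+1-p\right) _{\upsilon j}$ is read off from $\Delta\left( \upsilon ,n+1-p\right) $ in the same way, after which the identification is immediate.
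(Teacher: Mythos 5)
Your proposal is correct and follows essentially the same route as the paper's own proof: termwise Laplace transformation of the series representation (\ref{9}), evaluation of each term via $\int_0^\infty e^{-ax}x^{q+\upsilon j}\,dx=\Gamma(q+\upsilon j+1)/a^{q+\upsilon j+1}$, cancellation of $(q+1)_{\upsilon j}$ and absorption of $(q+1)_{\upsilon n}\Gamma(q+1)$ into $\Gamma(\upsilon n+q+1)$, and identification of the resulting finite sum with the $_{\upsilon+1}F_{0}$ symbol under the paper's $\Delta$-convention. Your added remarks on where $q>-1$ and $\left\vert (w/a)^{\upsilon}\right\vert<1$ enter are accurate refinements of details the paper leaves implicit.
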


\begin{proof}
\begin{eqnarray*}
L\left\{ x^{q}M_{n}\left( p,q,\upsilon ;wx\right) \right\}
&=&\int\limits_{0}^{\infty }e^{-\alpha x}x^{q}\left( -1\right) ^{n}\left(
q+1\right) _{\upsilon n}\sum\limits_{j=0}^{n}\left( -1\right) ^{j}\binom{n}{j%
}\frac{\left( n+1-p\right) _{\upsilon j}}{\left( q+1\right) _{\upsilon j}}%
\left( -wx\right) ^{\upsilon j}dx \\
&=&\frac{1}{\alpha ^{q+1}}\left( -1\right) ^{n}\Gamma \left( \upsilon
n+q+1\right) \sum\limits_{j=0}^{n}\left( -1\right) ^{j}\binom{n}{j}\left(
n+1-p\right) _{\upsilon j}\left( \frac{-w}{\alpha }\right) ^{\upsilon j} \\
&=&\frac{\left( -1\right) ^{n}\Gamma \left( \upsilon n+q+1\right) }{\alpha
^{q+1}}\ _{\upsilon +1}F_{0}\left[ -n,\Delta \left( \upsilon ,n+1-p\right)
;-;\left( \frac{-w}{\alpha }\right) ^{\upsilon }\right] .
\end{eqnarray*}
\end{proof}

\bigskip

The Riemann-Liouville fractional integral is defined by \cite{KST}%
\begin{equation*}
_{x}I_{a^{+}}^{\mu }\left( f\right) =\frac{1}{\Gamma \left( \mu \right) }%
\int\limits_{a}^{x}\left( x-t\right) ^{\mu -1}f\left( t\right) dt,\ \ \ \
f\in L^{1}\left[ a,b\right] ,
\end{equation*}%
where $\mu \in 
\mathbb{C}
$,$\ \func{Re}\left( \mu \right) >0$,$\ x>a$.

The Riemann-Liouville fractional derivativen is defined by \cite{KST}%
\begin{equation*}
_{x}D_{a^{+}}^{\lambda }\left( f\right) =\left( \frac{d}{dx}\right) ^{n}\
_{x}I_{a^{+}}^{n-\lambda }\left( f\right) ,\ \ \ \ f\in C^{n}\left[ a,b%
\right] ,
\end{equation*}%
where $\lambda \in 
\mathbb{C}
$, $\func{Re}\left( \lambda \right) >0$, $\left[ \func{Re}\left( \lambda
\right) \right] $ is the integral part of $\func{Re}\left( \lambda \right) $%
, $n=\left[ \func{Re}\left( \lambda \right) \right] +1$ and $x>a$.

\begin{theorem}
The set of the polynomials $M_{n}\left( p,q,\upsilon ;x\right) $ have the
following Riemann-Liouville fractional integral operator%
\begin{equation*}
\ _{x}I_{a^{+}}^{\mu }\left[ \left( x-a\right) ^{q}M_{n}\left( p,q,\upsilon
;w\left( x-a\right) \right) \right] =\frac{\left( x-a\right) ^{q+\mu }\Gamma
\left( \upsilon n+q+1\right) }{\Gamma \left( \upsilon n+q+\mu +1\right) }%
M_{n}\left( p,q+\mu ,\upsilon ;w\left( x-a\right) \right) .
\end{equation*}
\end{theorem}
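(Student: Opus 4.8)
The claim is a Riemann–Liouville fractional-integral identity for the polynomials $M_n(p,q,\upsilon;x)$. The plan is to reduce everything to a single scalar fractional-integral evaluation of a monomial, because $M_n(p,q,\upsilon;x)$ is, by the explicit series (\ref{9}), a finite linear combination of powers $x^{\upsilon j}$. The key elementary fact I would invoke is the fractional integral of a shifted power,
\begin{equation*}
\ _{x}I_{a^{+}}^{\mu }\left[ (x-a)^{\beta }\right] =\frac{\Gamma \left( \beta +1\right) }{\Gamma \left( \beta +\mu +1\right) }(x-a)^{\beta +\mu },
\end{equation*}
valid for $\operatorname{Re}(\mu)>0$ and $\beta>-1$, which follows directly from the Beta integral after the substitution $t=a+(x-a)u$. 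This is the engine of the whole computation.

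**Main line of the proof.** First I would substitute the explicit series (\ref{9}) into the left-hand side, writing
\begin{equation*}
(x-a)^{q}M_{n}\left( p,q,\upsilon ;w(x-a)\right) =\left( -1\right) ^{n}\left( q+1\right) _{\upsilon n}\sum_{j=0}^{n}\left( -1\right) ^{j+\upsilon j}\binom{n}{j}\frac{\left( n+1-p\right) _{\upsilon j}}{\left( q+1\right) _{\upsilon j}}w^{\upsilon j}(x-a)^{q+\upsilon j}.
\end{equation*}
Since $\ _{x}I_{a^{+}}^{\mu }$ is linear, I would interchange the (finite) sum with the operator and apply the monomial formula above with $\beta=q+\upsilon j$. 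Each term then acquires the factor $\Gamma(q+\upsilon j+1)/\Gamma(q+\upsilon j+\mu+1)$ and its exponent rises from $q+\upsilon j$ to $q+\mu+\upsilon j$. After this step the $j$-th summand is $(x-a)^{q+\mu+\upsilon j}$ times a constant.

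**Recombining into the target polynomial.** The crucial algebraic step is to recognize that the resulting series is again an $M_n$, now with parameter $q$ replaced by $q+\mu$. I would factor out the common $(x-a)^{q+\mu}$ and compare the remaining series with the definition of $M_n(p,q+\mu,\upsilon;w(x-a))$. The Pochhammer bookkeeping is the heart of the matter: using $\Gamma(q+\upsilon j+1)=\Gamma(q+1)(q+1)_{\upsilon j}$ and likewise $\Gamma(q+\upsilon j+\mu+1)=\Gamma(q+\mu+1)(q+\mu+1)_{\upsilon j}$, the $j$-dependent gamma ratio becomes $(q+1)_{\upsilon j}/(q+\mu+1)_{\upsilon j}$ up to the $j$-independent prefactor $\Gamma(q+1)/\Gamma(q+\mu+1)$. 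This $(q+1)_{\upsilon j}$ exactly cancels the denominator $(q+1)_{\upsilon j}$ present in each term of (\ref{9}) and supplies the $(q+\mu+1)_{\upsilon j}$ denominator demanded by $M_n(p,q+\mu,\upsilon;\cdot)$. Finally I would match the overall normalizations: the definition of $M_n(p,q+\mu,\upsilon;\cdot)$ carries the factor $(q+\mu+1)_{\upsilon n}$ whereas our expression carries $(q+1)_{\upsilon n}$, and together with the surviving $\Gamma(q+1)/\Gamma(q+\mu+1)$ these combine, via $\Gamma(q+1)(q+1)_{\upsilon n}=\Gamma(\upsilon n+q+1)$ and the analogous identity for $q+\mu$, into the announced ratio $\Gamma(\upsilon n+q+1)/\Gamma(\upsilon n+q+\mu+1)$.

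**Where the difficulty lies.** No single step is deep; the only real obstacle is the Pochhammer/Gamma bookkeeping in the recombination, where one must keep the $\upsilon j$-shifted indices straight and verify that the normalization constant of the target $M_n$ is correctly reproduced rather than off by a factor. I would perform that reconciliation carefully using the two identities $\Gamma(q+1)(q+1)_{\upsilon j}=\Gamma(q+\upsilon j+1)$ and $\Gamma(q+\mu+1)(q+\mu+1)_{\upsilon j}=\Gamma(q+\mu+\upsilon j+1)$, which make the cancellation transparent. The convergence and validity conditions $\operatorname{Re}(\mu)>0$ and $q>-1$ guarantee $\beta=q+\upsilon j>-1$ for every $j$, so the monomial formula applies termwise, completing the argument.
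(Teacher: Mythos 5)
Your proposal is correct and takes essentially the same route as the paper's own proof: the paper likewise expands $M_{n}$ via the explicit series (\ref{9}), interchanges the finite sum with the fractional integral, evaluates each term by the substitution $t=a+\left( x-a\right) u$ reducing it to a Beta integral (which is precisely your shifted-power formula $\ _{x}I_{a^{+}}^{\mu }\left[ \left( x-a\right) ^{\beta }\right] =\frac{\Gamma \left( \beta +1\right) }{\Gamma \left( \beta +\mu +1\right) }\left( x-a\right) ^{\beta +\mu }$ with $\beta =q+\upsilon j$), and then recombines the Gamma ratios into $M_{n}\left( p,q+\mu ,\upsilon ;w\left( x-a\right) \right) $ with the prefactor $\Gamma \left( \upsilon n+q+1\right) /\Gamma \left( \upsilon n+q+\mu +1\right) $. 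The only difference is presentational: you package the Beta-integral step as a standalone lemma and write out the Pochhammer bookkeeping that the paper compresses into its final line.
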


\begin{proof}
\begin{eqnarray*}
&&_{x}I_{a^{+}}^{\mu }\left[ \left( x-a\right) ^{q}M_{n}\left( p,q,\upsilon
;w\left( x-a\right) \right) \right] \\
&=&\frac{1}{\Gamma \left( \mu \right) }\left( -1\right) ^{n}\left(
q+1\right) _{\upsilon n}\sum\limits_{j=0}^{n}\left( -1\right) ^{j}\binom{n}{j%
}\frac{\left( n+1-p\right) _{\upsilon j}}{\left( q+1\right) _{\upsilon j}}%
\left( -w\right) ^{\upsilon j}\int\limits_{a}^{x}\left( x-t\right) ^{\mu
-1}\left( t-a\right) ^{q+\upsilon j}dt \\
&=&\frac{1}{\Gamma \left( \mu \right) }\left( -1\right) ^{n}\left(
q+1\right) _{\upsilon n}\sum\limits_{j=0}^{n}\left( -1\right) ^{j}\binom{n}{j%
}\frac{\left( n+1-p\right) _{\upsilon j}}{\left( q+1\right) _{\upsilon j}}%
\left( -w\left( x-a\right) \right) ^{\upsilon j}\int\limits_{0}^{1}\left(
1-u\right) ^{\mu -1}u^{q+\upsilon j}du \\
&=&\left( x-a\right) ^{q+\mu }\left( -1\right) ^{n}\left( q+1\right)
_{\upsilon n}\sum\limits_{j=0}^{n}\left( -1\right) ^{j}\binom{n}{j}\frac{%
\Gamma \left( q+1+\upsilon j\right) \left( n+1-p\right) _{\upsilon j}\left(
-w\left( x-a\right) \right) ^{\upsilon j}}{\Gamma \left( q+\mu +1+\upsilon
j\right) \left( q+1\right) _{\upsilon j}} \\
&=&\left( x-a\right) ^{q+\mu }\frac{\Gamma \left( q+1+\upsilon n\right) }{%
\Gamma \left( q+\mu +1+\upsilon n\right) }M_{n}\left( p,q+\mu ,\upsilon
;w\left( x-a\right) \right) .
\end{eqnarray*}
\end{proof}

\begin{theorem}
For polynomials $M_{n}\left( p,q,\upsilon ;x\right) $, the Riemann-Liouville
fractional derivative operator%
\begin{equation*}
\ _{x}D_{a^{+}}^{\lambda }\left[ \left( x-a\right) ^{q}M_{n}\left(
p,q,\upsilon ;w\left( x-a\right) \right) \right] =\frac{\left( x-a\right)
^{q-\lambda }\Gamma \left( \upsilon n+q+1\right) }{\Gamma \left( \upsilon
n+q-\lambda +1\right) }M_{n}\left( p,q-\lambda ,\upsilon ;w\left( x-a\right)
\right)
\end{equation*}%
holds.
\end{theorem}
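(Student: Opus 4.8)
The plan is to mimic the proof of the preceding fractional-integral theorem and differentiate the explicit series (\ref{9}) term by term. The function to be differentiated is
\begin{equation*}
\left( x-a\right) ^{q}M_{n}\left( p,q,\upsilon ;w\left( x-a\right) \right) =\left( -1\right) ^{n}\left( q+1\right) _{\upsilon n}\sum\limits_{j=0}^{n}\left( -1\right) ^{j}\binom{n}{j}\frac{\left( n+1-p\right) _{\upsilon j}}{\left( q+1\right) _{\upsilon j}}\left( -w\right) ^{\upsilon j}\left( x-a\right) ^{q+\upsilon j},
\end{equation*}
a finite linear combination of the powers $\left( x-a\right) ^{q+\upsilon j}$, $0\le j\le n$. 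Since the Riemann--Liouville operator $_{x}D_{a^{+}}^{\lambda }$ is linear and the sum is finite, I would apply it to each power separately, so there is no question of interchanging a derivative with an infinite series.

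First I would record the power rule
\begin{equation*}
_{x}D_{a^{+}}^{\lambda }\left[ \left( x-a\right) ^{\beta }\right] =\frac{\Gamma \left( \beta +1\right) }{\Gamma \left( \beta -\lambda +1\right) }\left( x-a\right) ^{\beta -\lambda },\qquad \beta >-1,
\end{equation*}
which is obtained exactly as in the preceding theorem: by the definition $_{x}D_{a^{+}}^{\lambda }=\left( d/dx\right) ^{k}\,_{x}I_{a^{+}}^{k-\lambda }$ with $k=\left[ \func{Re}\left( \lambda \right) \right] +1$ (written $k$ rather than $n$ to avoid a clash with the degree), one evaluates $_{x}I_{a^{+}}^{k-\lambda }\left[ \left( x-a\right) ^{\beta }\right] $ through the same Beta-integral $\int_{0}^{1}\left( 1-u\right) ^{k-\lambda -1}u^{\beta }\,du$ that appeared there and then differentiates $k$ times. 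Taking $\beta =q+\upsilon j$, which satisfies $\beta >-1$ because $q>-1$, produces a factor $\Gamma \left( q+\upsilon j+1\right) /\Gamma \left( q-\lambda +\upsilon j+1\right) $ times $\left( x-a\right) ^{q-\lambda +\upsilon j}$ in the $j$-th term.

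The remaining work is the Pochhammer bookkeeping needed to recognize the resulting sum as the claimed polynomial. Using $\Gamma \left( q+\upsilon j+1\right) =\Gamma \left( q+1\right) \left( q+1\right) _{\upsilon j}$ and $\Gamma \left( q-\lambda +\upsilon j+1\right) =\Gamma \left( q-\lambda +1\right) \left( q-\lambda +1\right) _{\upsilon j}$, the factor $\left( q+1\right) _{\upsilon j}$ cancels the denominator in (\ref{9}), leaving $\left( n+1-p\right) _{\upsilon j}/\left( q-\lambda +1\right) _{\upsilon j}$ inside the sum. Pulling out $\left( x-a\right) ^{q-\lambda }$ and rewriting $\left( -w\right) ^{\upsilon j}\left( x-a\right) ^{\upsilon j}$ as $\left( -w\left( x-a\right) \right) ^{\upsilon j}$, the inner sum is precisely the one defining $M_{n}\left( p,q-\lambda ,\upsilon ;w\left( x-a\right) \right) $, apart from its normalizing constant $\left( q-\lambda +1\right) _{\upsilon n}$.

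The main obstacle is matching the two normalizing constants. I would combine $\left( q+1\right) _{\upsilon n}\Gamma \left( q+1\right) =\Gamma \left( \upsilon n+q+1\right) $ with $\left( q-\lambda +1\right) _{\upsilon n}\Gamma \left( q-\lambda +1\right) =\Gamma \left( \upsilon n+q-\lambda +1\right) $ to see that the surviving constant collapses to $\Gamma \left( \upsilon n+q+1\right) /\Gamma \left( \upsilon n+q-\lambda +1\right) $, which is exactly the stated prefactor; the two factors $\left( -1\right) ^{n}$ cancel, so no sign has to be chased. This Gamma-to-Pochhammer arithmetic, rather than any analytic difficulty, is the only place an error is likely to creep in, and everything else is the same formal term-by-term manipulation already used for the fractional integral.
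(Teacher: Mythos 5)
Your proposal is correct and follows essentially the same route as the paper: both expand $\left( x-a\right) ^{q}M_{n}\left( p,q,\upsilon ;w\left( x-a\right) \right)$ via (\ref{9}) into powers $\left( x-a\right) ^{q+\upsilon j}$, apply the definition $_{x}D_{a^{+}}^{\lambda }=D_{x}^{k}\,_{x}I_{a^{+}}^{k-\lambda }$ with the Beta-integral evaluation term by term, and finish with the same Gamma-to-Pochhammer bookkeeping. Isolating the power rule as a separate lemma before invoking linearity is only an organizational difference, not a different argument.
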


\begin{proof}
\begin{eqnarray*}
&&\ _{x}D_{a^{+}}^{\lambda }\left[ \left( x-a\right) ^{q}M_{n}\left(
p,q,\upsilon ;w\left( x-a\right) \right) \right] =D_{x}^{r}\
_{x}I_{a^{+}}^{r-\lambda }\left[ \left( x-a\right) ^{q}M_{n}\left(
p,q,\upsilon ;w\left( x-a\right) \right) \right] \\
&=&\frac{1}{\Gamma \left( r-\lambda \right) }\left( -1\right) ^{n}\left(
q+1\right) _{\upsilon n}\sum\limits_{j=0}^{n}\left( -1\right) ^{j}\binom{n}{j%
}\frac{\left( n+1-p\right) _{\upsilon j}}{\left( q+1\right) _{\upsilon j}}%
\left( -w\right) ^{\upsilon j} \\
&&\times D_{x}^{r}\int\limits_{a}^{r}\left( x-\zeta \right) ^{r-\lambda
-1}\left( \zeta -a\right) ^{q+\upsilon j}d\zeta \\
&=&\left( -1\right) ^{n}\Gamma \left( \upsilon n+q+1\right)
\sum\limits_{j=0}^{n}\left( -1\right) ^{j}\binom{n}{j}\frac{\left(
n+1-p\right) _{\upsilon j}}{\Gamma \left( q-\lambda +\upsilon j+1\right) }%
\left( -w\right) ^{\upsilon j}\left( x-a\right) ^{q-\lambda +\upsilon j} \\
&=&\left( x-a\right) ^{q-\lambda }\frac{\Gamma \left( \upsilon n+q+1\right) 
}{\Gamma \left( \upsilon n+q-\lambda +1\right) }\left[ \left( -1\right)
^{n}\left( q-\lambda +1\right) _{\upsilon n}\right. \\
&&\left. \times \sum\limits_{j=0}^{n}\left( -1\right) ^{j}\binom{n}{j}\frac{%
\left( n+1-p\right) _{\upsilon j}}{\left( q-\lambda +1\right) _{\upsilon j}}%
\left( -w\left( x-a\right) \right) ^{\upsilon j}\right] \\
&=&\left( x-a\right) ^{q-\lambda }\frac{\Gamma \left( q+1+\upsilon n\right) 
}{\Gamma \left( q-\lambda +1+\upsilon n\right) }M_{n}\left( p,q-\lambda
,\upsilon ;w\left( x-a\right) \right) .
\end{eqnarray*}
\end{proof}

\section{Concluding remarks}

In this study, a pair of finite univariate biorthogonal polynomials
suggested by finite orthogonal polynomials $M_{n}^{\left( p,q\right) }\left(
x\right) $ is presented for the first time in the literature. We give some
important properties of them. A purpose of this study is to obtain
somefamilies of finite biorthogonal functions using Parseval identity.
Another aim of this paper is to construct a new fractional calculus based on
the finite biorthogonal polynomials suggested by finite orthogonal
polynomials $M_{n}^{\left( p,q\right) }\left( x\right) $. Also, we introduce
some useful limit and connection relations.

It can be noted that, in the special case of $p=n+1$ and $p=n-q$, the
polynomials $M_{n}\left( p,q,\upsilon ;x\right) $ and $\mathfrak{M}%
_{n}\left( p,q,\upsilon ;x\right) $ defined by (\ref{9}) and (\ref{11}) are
related to the first and second sets of the Konhauser polynomials, $%
Z_{n}^{\left( \gamma \right) }(x;\upsilon )$ and $Y_{n}^{\left( \gamma
\right) }(x;\upsilon )$, respectively:%
\begin{equation*}
M_{n}\left( n+1,q,\upsilon ;x\right) =\left( -1\right) ^{n}n!Z_{n}^{\left(
q\right) }(-x;\upsilon )
\end{equation*}%
and%
\begin{equation*}
\mathfrak{M}_{n}\left( n-q,q,\upsilon ;x\right) =\left( -\left( 1+x\right)
\right) ^{n}n!Y_{n}^{\left( q\right) }(\frac{x}{1+x};\upsilon ).
\end{equation*}

Finite biorthogonal polynomial sets are a new research interest. This new
subject enables researchers to investigate many properties and applications
of the finite biorthogonal set. Moreover, many extensions like q-analogs and
multivariate forms of this class, of which very few properties are known
yet, can be studied.

\begin{center}
$\mathtt{FUNDING}$
\end{center}

The author G\"{u}ldo\u{g}an Lekesiz E. in the work has been partially
supported by the Scientific and Technological Research Council of Turkey
(TUBITAK) (Grant number 2218-122C240).\bigskip

\begin{center}
DECLARATIONS
\end{center}

$\mathbf{Conflict\ of\ Interest\ }$The authors declared that they have no
conflict of interest.

\end{document}